\newtheorem {Lemma}{Lemma}[section]
\newtheorem {Theorem} {Theorem}[section]
\newtheorem{Proposition}{Proposition}[section]
\newtheorem {Corollary}{Corollary}[section]
\numberwithin{equation}{section}
\begin{document}

\title{On the $\alpha$-spectral radius of graphs}

\author{Haiyan Guo\footnote{ghaiyan0705@163.com}, Bo Zhou\footnote{Corresponding author. E-mail: zhoubo@scnu.edu.cn}\\
School of  Mathematical Sciences, South China Normal University,\\
Guangzhou 510631, P.R. China}

\date{}
\maketitle

\begin{abstract}
For $0\le \alpha\le 1$, Nikiforov proposed to study the spectral properties of the family of matrices  $A_{\alpha}(G)=\alpha D(G)+(1-\alpha)A(G)$ of a graph $G$, where $D(G)$ is the degree diagonal matrix  and $A(G)$ is the adjacency matrix. The $\alpha$-spectral radius of $G$ is the largest eigenvalue of $A_{\alpha}(G)$.
We give upper bounds for $\alpha$-spectral radius for unicyclic graphs $G$ with maximum degree $\Delta\ge 2$, connected irregular graphs  with given maximum degree and  and some other graph parameters, and graphs with given domination number, respectively. We determine the unique tree with second maximum $\alpha$-spectral radius among trees, and the unique tree with maximum $\alpha$-spectral radius among trees with given diameter. For  a graph with two pendant paths at a vertex or at two adjacent vertex, we prove  results concerning the behavior of  the $\alpha$-spectral radius under relocation of a pendant edge in a pendant path. We also determine the unique graphs such that the difference between the maximum degree and the $\alpha$-spectral radius is maximum among trees, unicyclic graphs and non-bipartite graphs, respectively. \\ \\
{\bf 2010 Mathematics Subject Classification:} 05C50\\ \\
{\bf Keywords and Phrases:}   $\alpha$-spectral radius, adjacency matrix, signless Laplacian matrix, tree,  unicyclic graph, irregular graph
\end{abstract}

\section{Introduction}

We consider simple and undirected graphs. Let $G$ be a
graph with vertex set $V(G)$ and edge set $E(G)$. Denote by $d_G(u)$ or simply $d_u$ the degree of $u$ in $G$. the degree of vertex $u$ in $G$.
Let $A(G)$ be the adjacency matrix and $D(G)$ the diagonal matrix of the degrees of $G$. The signless Laplacian matrix of $G$ is known as $Q(G)=D(G)+A(G)$.
The spectral properties of the adjacency matrix and the signless Laplacian matrix of a graph have been investigated for a long time, see, e.g., \cite{CDS, CRS}.
For any real $\alpha\in [0,1]$, Nikiforov~\cite{Ni1} proposed to study the spectral properties of the family of matrices  $A_{\alpha}(G)$ defined as the convex linear combination:
\[
A_{\alpha}(G)=\alpha D(G)+ (1-\alpha)A(G).
\]
Obviously, $A(G)=A_0(G)$ and $Q(G)=2A_{1/2}(G)$. For any real $\alpha\in [0,1]$, $A_{\alpha}(G)$ is a symmetric nonnegative matrix,
and thus its eigenvalues are all real. We call the largest eigenvalue of $A_{\alpha}(G)$ the $\alpha$-spectral radius of $G$,
denoted by $\rho_{\alpha}(G)$. Among others, Nikiforov \cite{Ni1} showed that the $r$-partite Tur\'an graph is the unique graph
with maximum $\alpha$-spectral radius for $0\le \alpha<1-\frac{1}{r}$ among $K_{r+1}$-free graphs on $n$ vertices
with $r\ge 2$, where $K_s$ is a complete graph with $s$ vertices. For   a tree $T$ with maximum degree $\Delta\ge 2$, Nikiforov et al.~\cite{NP} found an interesting bound for its $\alpha$-spectral radius:
$\rho_{\alpha}(T)<\alpha \Delta+2(1-\alpha)\sqrt{\Delta-1}$
when  $0\le \alpha<1$. This  implies some previous results in \cite{Go,St}.
They also showed in~\cite{NP} that for $0\le \alpha\le 1$, if $T$ is a tree on $n$ vertices, then $\rho_\alpha(P_n)\le \rho_\alpha (T)\le \rho_\alpha(S_n)$ with left (right, respectively) equality if and only if $T\cong P_n$ ($T\cong S_n$, respectively), where $S_n$ and $P_n$ are the star and the path on $n$ vertices, respectively.
Very recently, Nikiforov and Rojo \cite{NO} determined the unique graph with maximum $\alpha$-spectral radius among connected graphs on $n$ vertices with diameter (at least) $k$.


For $u,v\in V(G)$, the distance between $u$ and $v$ in $G$, denoted by $d_G(u,v)$, is the length of a shortest path
from $u$ to $v$ in $G$. The diameter of $G$ is the maximum distance between all vertex pairs of $G$.

A dominating set of $G$ is a vertex subset $S$ of $G$ such that each vertex of $V(G)\setminus S$ is adjacent to at least one vertex of $S$. The domination number of $G$, denoted by $\gamma(G)$, is the minimum cardinality of dominating sets of $G$.

In this article, we show that the upper bound for $\alpha$-spectral radius of trees with maximum degree $\Delta\ge 2$  in \cite{NP} holds also for unicyclic graphs, and
we give upper bounds for $\alpha$-spectral radius of connected irregular graphs  with fixed maximum degree and some other graph parameters, and of graphs with fixed domination number, respectively.
We determine the unique tree with second maximum $\alpha$-spectral radius among trees, and the unique tree with maximum $\alpha$-spectral radius among trees with given diameter. For  a graph with two pendant paths at a vertex or at two adjacent vertices,
we prove two results concerning the behavior of  the $\alpha$-spectral radius under relocation of a pendant edge in a pendant path, which were conjectured in \cite{NO}. We also determine the unique graphs such that the difference between the maximum degree and the $\alpha$-spectral radius is maximum among trees, unicyclic graphs and non-bipartite graphs, respectively.

\section{Preliminaries}


For a graph $G$ with $u\in V(G)$, $N_G(u)$ denotes the set of vertices that are adjacent to $u$ in $G$. For undefined notations and terminology for graphs, the readers are referred to \cite{BM}.

Let $G$ be a graph with $V(G)=\{v_1,\dots,v_n\}$. A column
vector $x=(x_{v_1},\dots, x_{v_n})^\top\in \mathbb{R}^n$ can be
considered as a function defined on $V(G)$ which maps vertex $v_i$
to $x_{v_i}$, i.e., $x(v_i)=x_{v_i}$ for $i=1,\dots,n$. Then
\[
x^\top A_{\alpha}(G)x=\alpha\sum_{u\in  V(G)}d_G(u)x_u^2+2(1-\alpha)\sum_{uv\in E(G)}x_ux_v.
\]
Moreover,  $\lambda$ is an eigenvalue of $A_{\alpha}(G)$ if and only if $x\ne 0$ and we have the following eigenequation at $u$ for each $u\in V(G)$:
\[
\lambda x_{u}=\alpha d_ux_{u}+(1-\alpha)\sum_{v\in N_G(u)}x_{v}.\]


If $0\le \alpha<1$ and  $G$ is connected, then $A_{\alpha}(G)$ is irreducible, and by Perron-Frobenius theorem,
it has a unique unit positive $x$ eigenvector corresponding to $\rho_{\alpha}(G)$. We call such a vector $x$ the Perron vector of $A_{\alpha}(G)$, see \cite{Ni1}.

If $G$ is connected, and $H$ is a proper subgraph of $G$, then by \cite[Crollary 2.2, p.~38]{Mi}, $\rho_\alpha(H) < \rho_\alpha(G)$ for  $0\le \alpha<1$.


The following lemma is somewhat similar to~\cite[Proposition~15]{Ni1}.

\begin{Lemma}\label{moving edge} \cite{NO} Let $G$ be a connected graph with $u,v\in V(G)$. Suppose that $v_1,\dots,v_s\in (N_G(v)\setminus N_G(u))\setminus\{u\}$, where $1\le s\le d_G(v)$. Let $G'=G-\{vv_i: 1\le i\le s\}+\{uv_i: 1\le i\le s\}$. Let  $0\le \alpha<1$ and let $x$ be  the Perron vector of $A_\alpha(G)$.  If $x_u\ge x_v$,
then $\rho_\alpha(G)<\rho_\alpha (G')$.
\end{Lemma}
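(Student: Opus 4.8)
The plan is to run the standard Rayleigh-quotient comparison with the Perron vector $x$ of $A_\alpha(G)$. Since $x$ is a unit vector, $\rho_\alpha(G')\ge x^\top A_\alpha(G')x$, so it suffices to understand $x^\top A_\alpha(G')x - x^\top A_\alpha(G)x$, where $x^\top A_\alpha(G)x=\rho_\alpha(G)$. First I would record exactly how passing from $G$ to $G'$ changes degrees and edges: $d_{G'}(u)=d_G(u)+s$, $d_{G'}(v)=d_G(v)-s$, every other degree (in particular each $d_G(v_i)$) is unchanged, and $E(G')$ is obtained from $E(G)$ by deleting the $s$ edges $vv_i$ and inserting the $s$ edges $uv_i$; since each $v_i\ne u$ and $v_i\notin N_G(u)$, no loop or multiple edge is created, so $G'$ is a simple graph. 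Substituting into $x^\top A_\alpha(H)x=\alpha\sum_w d_H(w)x_w^2+2(1-\alpha)\sum_{wz\in E(H)}x_wx_z$ and collecting terms gives
\[
x^\top A_\alpha(G')x - x^\top A_\alpha(G)x = (x_u - x_v)\Bigl(\alpha s(x_u+x_v) + 2(1-\alpha)\sum_{i=1}^s x_{v_i}\Bigr).
\]
Because $x$ is the Perron vector of the connected graph $G$ with $0\le\alpha<1$, all entries of $x$ are positive, so with $s\ge1$ and $\alpha<1$ the parenthesized factor is strictly positive; together with the hypothesis $x_u\ge x_v$ this yields $x^\top A_\alpha(G')x\ge\rho_\alpha(G)$, hence $\rho_\alpha(G')\ge\rho_\alpha(G)$.

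It remains to exclude equality. Suppose $\rho_\alpha(G')=\rho_\alpha(G)$. Then $x^\top A_\alpha(G')x=\rho_\alpha(G')\,x^\top x$, and since $\rho_\alpha(G')I-A_\alpha(G')$ is positive semidefinite (it is the largest eigenvalue of a real symmetric matrix), the identity $x^\top(\rho_\alpha(G')I-A_\alpha(G'))x=0$ forces $(\rho_\alpha(G')I-A_\alpha(G'))x=0$, i.e.\ $x$ is an eigenvector of $A_\alpha(G')$ for $\rho_\alpha(G')$. (Arguing this way avoids appealing to Perron--Frobenius for $G'$, which is convenient since $v$ may become isolated in $G'$ when $s=d_G(v)$ and $u\notin N_G(v)$.) Now I would write the eigenequation of $A_\alpha(G')$ at $u$, using $d_{G'}(u)=d_G(u)+s$ and $N_{G'}(u)=N_G(u)\cup\{v_1,\dots,v_s\}$ (a disjoint union), and subtract the eigenequation of $A_\alpha(G)$ at $u$; this gives
\[
(\rho_\alpha(G')-\rho_\alpha(G))\,x_u = \alpha s\,x_u + (1-\alpha)\sum_{i=1}^s x_{v_i}.
\]
The right-hand side is strictly positive, so $\rho_\alpha(G')>\rho_\alpha(G)$, contradicting the assumed equality; hence the inequality is strict and the lemma follows.

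The only points requiring care are the bookkeeping in the quadratic-form difference (accounting exactly for the degree changes at $u$ and $v$ and for the swapped edges, and checking that $G'$ is genuinely simple) and the decision to treat the equality case through positive semidefiniteness of $\rho_\alpha(G')I-A_\alpha(G')$ rather than through irreducibility of $A_\alpha(G')$, since $G'$ need not be connected. Everything else is a short computation.
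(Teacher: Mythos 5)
Your proof is correct, and it is the standard Rayleigh-quotient comparison followed by the eigenequation argument at $u$ to rule out equality --- exactly the technique this paper uses for the analogous Lemma~\ref{switching}; the paper itself only cites \cite{NO} for Lemma~\ref{moving edge} rather than reproving it. Your extra care in deducing that $x$ is an eigenvector of $A_\alpha(G')$ via positive semidefiniteness of $\rho_\alpha(G')I-A_\alpha(G')$ (rather than invoking Perron--Frobenius for $G'$, which may be disconnected) is a legitimate refinement of the step the paper states without comment in the proof of Lemma~\ref{switching}.
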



\begin{Corollary} \label{ss} Let $G$ be a connected graph and $e=uv$  a cut edge of  $G$. Suppose that $G-\{e\}$ consists of two nontrivial
 components $G_1$ and $G_2$ with $u\in V(G_1)$ and $v\in V(G_2)$. Let $G'$ be a graph obtained from $G$ by identifying $u$ of $G_1$ with $v$ of $G_2$,
and adding a pendant edge to this common vertex. Then $\rho_\alpha(G)<\rho_\alpha(G')$ for $0\le \alpha<1$.
\end{Corollary}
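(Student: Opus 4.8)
The plan is to deduce this from a single application of Lemma~\ref{moving edge}. Fix $\alpha\in[0,1)$ and let $x$ be the Perron vector of $A_\alpha(G)$. Since the graph $G'$ is unchanged under interchanging the pair $(u,G_1)$ with the pair $(v,G_2)$ — identifying $u$ with $v$ and attaching a pendant edge is a symmetric operation — I may assume without loss of generality that $x_u\ge x_v$.

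Next I would pick the edges to relocate. As $e=uv$ is a cut edge and $G_2$ is nontrivial and connected, $v$ has at least one neighbour inside $G_2$; write $N_G(v)\setminus\{u\}=\{v_1,\dots,v_s\}\subseteq V(G_2)$, so that $1\le s=d_G(v)-1<d_G(v)$. Because $u\in V(G_1)$ and $V(G_1)\cap V(G_2)=\varnothing$, none of the $v_i$ equals $u$ or is adjacent to $u$ in $G$, the only edge of $G$ joining $V(G_1)$ to $V(G_2)$ being $e$ itself. Hence $v_1,\dots,v_s\in(N_G(v)\setminus N_G(u))\setminus\{u\}$, and together with $x_u\ge x_v$ all hypotheses of Lemma~\ref{moving edge} are satisfied. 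Applying it gives $\rho_\alpha(G)<\rho_\alpha(G^{\prime\prime})$, where $G^{\prime\prime}=G-\{vv_i:1\le i\le s\}+\{uv_i:1\le i\le s\}$.

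Finally I would check that $G^{\prime\prime}\cong G'$. In $G^{\prime\prime}$ the vertex $v$ is adjacent only to $u$, hence is a pendant vertex; deleting it leaves $G_1$ together with the subgraph of $G_2$ induced by $V(G_2)\setminus\{v\}$, reattached to $G_1$ through $u$ in precisely the way it was attached to $v$ in $G$. Thus $G^{\prime\prime}$ is obtained from $G$ by merging $u$ and $v$ into a single vertex and hanging a new pendant edge there, i.e.\ $G^{\prime\prime}\cong G'$, and the desired inequality $\rho_\alpha(G)<\rho_\alpha(G')$ follows. The argument is short once Lemma~\ref{moving edge} is in hand; the only points needing care are the symmetry remark that legitimizes the assumption $x_u\ge x_v$ (so that whichever side carries the larger Perron entry, the relocated graph is still the same $G'$) and the bookkeeping confirming that $G^{\prime\prime}\cong G'$ and that $s\ge1$ — the latter being exactly where the nontriviality of $G_2$ is used.
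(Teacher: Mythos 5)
Your proposal is correct and is essentially identical to the paper's own proof: both assume $x_u\ge x_v$ (justified by the symmetry of the construction of $G'$), relocate all neighbours of $v$ in $G_2$ to $u$ via Lemma~\ref{moving edge}, and note that the resulting graph is isomorphic to $G'$. Your additional checks (that the $v_i$ lie outside $N_G(u)$ because $e$ is the only edge between $G_1$ and $G_2$, and that $s\ge 1$ by nontriviality of $G_2$) are exactly the details the paper leaves implicit.
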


\begin{proof} Let $x$ be  the Perron vector of $A_\alpha(G)$. We may assume that $x_u\ge x_v$. Let $N_{G_2}(v)=\{v_1,\dots,v_s\}$, where $s=d_G(v)-1\ge 1$.
Let $G^*=G-\{vv_i: 1\le i\le s\}+\{uv_i: 1\le i\le s\}$.
Obviously, $G^*\cong G'$. By Lemma~\ref{moving edge}, $\rho_\alpha(G)<\rho_\alpha(G^*)=\rho_\alpha(G')$.
\end{proof}

The following lemma is an extended version of Theorem~6.4.2 in \cite[p.~145]{CRSb}.

\begin{Lemma}\label{switching} Let $G$ be a connected graph  with edges $u_1u_2$ and $v_1 v_2$, where $u_1, u_2, v_1$ and $v_2$ are four distinct vertices of $G$, and $u_1v_2, v_1u_2\notin E(G)$. Let  $x$ the Perron  vector of $A_{\alpha}(G)$, where $0\le \alpha<1$. Let $G'=G-\{u_1u_2,v_1v_2\}+\{u_1v_2, v_1u_2\}$.
If $x_{u_1}\ge x_{v_1}$, $x_{u_2}\le x_{v_2}$ and one inequality is strict,   then $\rho_{\alpha}(G)<\rho_{\alpha}(G')$.
\end{Lemma}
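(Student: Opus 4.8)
The plan is to estimate the Rayleigh quotient $x^\top A_\alpha(G')x$ from below using the Perron vector $x$ of $A_\alpha(G)$, since $x$ is a unit vector and hence $\rho_\alpha(G')\ge x^\top A_\alpha(G')x$. First I would observe that $G$ and $G'$ have the same degree sequence: the switching operation removes edges $u_1u_2$ and $v_1v_2$ and adds $u_1v_2$ and $v_1u_2$, so $d_{G'}(u_1)=d_G(u_1)$, and likewise for $u_2,v_1,v_2$; all other degrees are unchanged. Therefore the diagonal term $\alpha\sum_{w\in V(G)}d_{G'}(w)x_w^2$ equals $\alpha\sum_{w\in V(G)}d_G(w)x_w^2$, and the entire difference $x^\top A_\alpha(G')x-x^\top A_\alpha(G)x$ comes from the off-diagonal part. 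Concretely,
\[
x^\top A_\alpha(G')x-x^\top A_\alpha(G)x=2(1-\alpha)\bigl(x_{u_1}x_{v_2}+x_{v_1}x_{u_2}-x_{u_1}x_{u_2}-x_{v_1}x_{v_2}\bigr).
\]

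Next I would factor the bracketed quantity as $(x_{u_1}-x_{v_1})(x_{v_2}-x_{u_2})$. Under the hypotheses $x_{u_1}\ge x_{v_1}$ and $x_{u_2}\le x_{v_2}$, both factors are nonnegative, so the product is $\ge 0$; moreover, since one of the two inequalities is strict, one factor is strictly positive. The only subtlety is that the product could still vanish if the \emph{other} factor is zero. To rule this out I would argue that if, say, $x_{u_1}>x_{v_1}$ while $x_{v_2}=x_{u_2}$, then at least we have $x^\top A_\alpha(G')x\ge x^\top A_\alpha(G)x=\rho_\alpha(G)$, i.e. $\rho_\alpha(G')\ge\rho_\alpha(G)$; and $G'$ is connected and not isomorphic to $G$ in a way that would force equality — more precisely, if $\rho_\alpha(G')=\rho_\alpha(G)$ then $x$ is a Perron vector of $A_\alpha(G')$ as well, and writing the eigenequation for $A_\alpha(G')$ at $v_2$ and comparing with the eigenequation for $A_\alpha(G)$ at $v_2$ yields $(1-\alpha)(x_{u_2}-x_{v_1})=0$ (since the neighborhoods of $v_2$ in $G$ and $G'$ differ exactly by swapping $v_1$ for $u_2$), hence $x_{u_2}=x_{v_1}$; combined with $x_{v_2}=x_{u_2}$ one chases a short contradiction against $x_{u_1}>x_{v_1}$ using the eigenequations at $u_1$ and $v_1$. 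The symmetric case $x_{u_1}=x_{v_1}$, $x_{u_2}<x_{v_2}$ is handled the same way with the roles of the pairs interchanged.

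The case where \emph{both} inequalities are strict is immediate: then $(x_{u_1}-x_{v_1})(x_{v_2}-x_{u_2})>0$, so $x^\top A_\alpha(G')x>\rho_\alpha(G)$ and therefore $\rho_\alpha(G')\ge x^\top A_\alpha(G')x>\rho_\alpha(G)$. So the entire argument reduces to the boundary situation in the previous paragraph. I expect that boundary case — promoting the non-strict inequality $\rho_\alpha(G')\ge\rho_\alpha(G)$ to a strict one when only one of the vertex-value inequalities is strict — to be the main obstacle; everything else is the one-line Rayleigh-quotient computation together with the elementary factorization. The standard device for that obstacle is exactly the one indicated: assume equality, deduce that $x$ must be the Perron vector of both matrices (using that $G'$ is connected and $0\le\alpha<1$, so $A_\alpha(G')$ is irreducible with a unique positive Perron vector), and then extract a contradiction by subtracting eigenequations at the vertices whose neighborhoods changed.
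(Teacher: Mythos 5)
Your strategy is exactly the paper's: the Rayleigh-quotient computation (with the observation that the switch preserves all degrees), the factorization $x_{u_1}x_{v_2}+x_{v_1}x_{u_2}-x_{u_1}x_{u_2}-x_{v_1}x_{v_2}=(x_{u_1}-x_{v_1})(x_{v_2}-x_{u_2})$, and the promotion of $\rho_{\alpha}(G')\ge\rho_{\alpha}(G)$ to a strict inequality by assuming equality, concluding that $x$ is also the Perron vector of $A_{\alpha}(G')$, and comparing eigenequations at a vertex whose neighborhood changed. However, your execution of the equality case contains a slip that breaks the chain you sketch. In $G'=G-\{u_1u_2,v_1v_2\}+\{u_1v_2,v_1u_2\}$ the neighborhood of $v_2$ changes by deleting $v_1$ and adding $u_1$ (not $u_2$): it is the edge $v_1v_2$ that is removed and $u_1v_2$ that is added. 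Subtracting the two eigenequations at $v_2$ therefore gives $(1-\alpha)(x_{u_1}-x_{v_1})=(\rho_{\alpha}(G')-\rho_{\alpha}(G))\,x_{v_2}=0$, i.e.\ $x_{u_1}=x_{v_1}$, which contradicts $x_{u_1}>x_{v_1}$ outright --- no further chase is needed. Your stated intermediate conclusion $x_{u_2}=x_{v_1}$ is not what the computation yields; and if one were to follow it together with $x_{u_2}=x_{v_2}$, one would only learn $x_{v_1}=x_{u_2}=x_{v_2}$, after which the eigenequations at $u_1$ and $v_1$ change by $\pm(1-\alpha)(x_{v_2}-x_{u_2})=0$ and give nothing, so the promised ``short contradiction'' would not close as written. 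With the neighborhood change correctly identified your argument is complete; the paper does the mirror-image version, normalizing (by the symmetry $u_1\mapsto v_2$, $u_2\mapsto v_1$, $v_1\mapsto u_2$, $v_2\mapsto u_1$) to the case $x_{u_2}<x_{v_2}$ and comparing eigenequations at $u_1$, whose neighborhood swaps $u_2$ for $v_2$.
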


\begin{proof} Note that
\begin{eqnarray*}
\rho_{\alpha}(G')-\rho_{\alpha}(G)&\ge& x^{\top}A_{\alpha}(G')x-x^{\top}A_{\alpha}(G)x\\
&=&2(1-\alpha)\sum_{uv\in E(G')}x_ux_v-2(1-\alpha) \sum_{uv\in E(G)}x_ux_v\\
&=&2(1-\alpha) (x_{v_1}x_{u_2}+x_{u_1}x_{v_2}-x_{u_1}x_{u_2}-x_{v_1}x_{v_2})\\
&=&2(1-\alpha)(x_{u_1}-x_{v_1})(x_{v_2}-x_{u_2})\\
&\ge& 0.
\end{eqnarray*}
Thus $\rho_{\alpha}(G')\ge \rho_{\alpha}(G)$.
Suppose that $\rho_{\alpha}(G')=\rho_{\alpha}(G)$. Then $x$ is the Perron  vector of $A_{\alpha}(G')$. We may assume that $x_{u_2}< x_{v_2}$.
From the eigenequations of $G'$ and $G$ at $u_1$, we have
\begin{eqnarray*}
\rho_{\alpha}(G')x_{u_1}&=&\alpha d_{u_1}x_{u_1}+(1-\alpha)\sum_{wu_1\in E(G')}x_w\\
&=&\alpha d_{u_1}x_{u_1}+(1-\alpha)\left(\sum_{wu_1\in E(G)}x_w-x_{u_2}+x_{v_2}\right)\\
&>&\alpha d_{u_1}x_{u_1}+(1-\alpha)\sum_{wu_1\in E(G)}x_w\\
&=&\rho_{\alpha}(G)x_{u_1},
\end{eqnarray*}
which is impossible. It follows that $\rho_{\alpha}(G')>\rho_{\alpha}(G)$.
\end{proof}

The following lemmas follows easily because as a quadratic function in $t$,  $at^2+b(t-c)^2$ for $a, b>0$ achieves its minimum value $\frac{abc^2}{a+b}$ when $t=\frac{bc}{a+b}$.

\begin{Lemma} \label{lem} \cite{Sh} If $a,b>0$, then $at^2+b(t-c)^2\ge \frac{abc^2}{a+b}$ with equality if and only if $t=\frac{bc}{a+b}$.
\end{Lemma}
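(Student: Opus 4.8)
The statement is that for $a,b>0$ the function $f(t)=at^2+b(t-c)^2$ satisfies $f(t)\ge \frac{abc^2}{a+b}$, with equality iff $t=\frac{bc}{a+b}$. Since this is a one-variable calculus fact, the plan is simply to expand, complete the square, and read off the minimum; no graph theory or spectral input is needed.

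\begin{proof}
Expanding, we have
\[
f(t)=at^2+b(t-c)^2=(a+b)t^2-2bct+bc^2.
\]
Since $a+b>0$, this is a strictly convex quadratic in $t$, so it attains a unique global minimum at the vertex $t^*=\frac{2bc}{2(a+b)}=\frac{bc}{a+b}$. Completing the square,
\[
f(t)=(a+b)\left(t-\frac{bc}{a+b}\right)^2+bc^2-\frac{b^2c^2}{a+b}
=(a+b)\left(t-\frac{bc}{a+b}\right)^2+\frac{abc^2}{a+b}.
\]
Because $(a+b)\bigl(t-\frac{bc}{a+b}\bigr)^2\ge 0$, we obtain $f(t)\ge \frac{abc^2}{a+b}$; and since $a+b>0$, equality holds precisely when $t-\frac{bc}{a+b}=0$, i.e. when $t=\frac{bc}{a+b}$.
\end{proof}

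The computation is entirely routine; the only thing to be careful about is the bookkeeping in the constant term, namely verifying $bc^2-\frac{b^2c^2}{a+b}=\frac{(a+b)bc^2-b^2c^2}{a+b}=\frac{abc^2}{a+b}$. There is no real obstacle here — the lemma is quoted as background (attributed to \cite{Sh}) and will be applied later, presumably to split a weighted sum of squared Perron-vector entries and extract a clean lower bound; the content of the paper lies in those applications, not in this elementary estimate.
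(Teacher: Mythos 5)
Your proof is correct and follows essentially the same route as the paper, which simply observes that $at^2+b(t-c)^2$ is a quadratic in $t$ attaining its minimum $\frac{abc^2}{a+b}$ at $t=\frac{bc}{a+b}$; your completion of the square just makes that one-line observation explicit, and the arithmetic in the constant term checks out.
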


\section{$\alpha$-spectral radius}

Let $B=(b_{ij})$ be an $n\times n$ nonnegative matrix with row sums $r_1, \dots, r_n$, where $r_1\ge\cdots\ge r_n$. Let $M$ be the largest diagonal entry and $N$ the largest non-diagonal entry of $B$, where $N>0$. Let $\rho(B)$ be the spectral radius of $B$. It is proved in \cite{DZ} that for $1\le \ell\le n$,
\[
\rho(B)\le \frac{r_{\ell}+M-N+\sqrt{(r_{\ell}-M+N)^2+4N\sum_{i=1}^{\ell-1}(r_i-r_{\ell})}}{2},
\]
with equality when $B$ is irreducible if and only if either  $r_1=\cdots=r_n$ or for some $2\le t\le \ell$, $b_{ii}=M$ for $1\le i\le t-1$, $b_{ik}=N$ for $1\le i\le n$ and $1\le k\le t-1$ with $k\ne i$, and $r_t=\cdots=r_n$. For a graph $G$ and $0\le \alpha<1$, we have $\rho_{\alpha}(G)=\rho(A_{\alpha}(G))$ and  applying this result in \cite{DZ}  to $A_{\alpha}(G)$, we have the following result.

Let $G$ be a graph on  $n\ge 2$  vertices with degree sequence $d_1, \dots, d_n$, where $d_1\ge \cdots\ge d_n$. Then for $0\le \alpha<1$ and $1\le \ell\le n$,
\[
\rho_{\alpha}(G)\le \frac{d_{\ell}+\alpha d_1-(1-\alpha)+\sqrt{(d_{\ell}-\alpha d_1+1-\alpha)^2+4(1-\alpha)\sum_{i=1}^{\ell-1}(d_i-d_{\ell})}}{2}
\]
with equality when $G$ is connected if and only if either $G$ is regular or $G$ is a graph with $d_1=\cdots=d_{t-1}=n-1>d_t=\cdots=d_n$ for some $2\le t\le \ell$.

By calculation of the $A_{\alpha}$-spectra of certain Bethe trees,  Nikiforov et al.~\cite{NP} showed that, for $0\le \alpha\le 1$,  $\rho_{\alpha}(T)<\alpha \Delta+2(1-\alpha)\sqrt{\Delta-1}$ for a tree $T$ with maximum degree $\Delta\ge 2$.  We extend this result to trees and unicyclic graphs.

\begin{Theorem}\label{t1}  Let $G$ be  a tree or unicyclic graph with maximum degree $\Delta\ge 2$. For $0\le \alpha\le 1$, we have
\[
\rho_{\alpha}(G)\le \alpha \Delta+2(1-\alpha)\sqrt{\Delta-1}
\]
with equality for $0\le \alpha<1$ if and only if $G$ is an cycle.
\end{Theorem}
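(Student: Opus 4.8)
The plan is to reduce the unicyclic (and tree) case to the known tree bound of Nikiforov et al.\ by a covering/embedding argument, using that $\rho_\alpha$ is monotone under passing to subgraphs and that the infinite $\Delta$-regular tree $T_\Delta$ has $\alpha$-spectral radius exactly $\alpha\Delta+2(1-\alpha)\sqrt{\Delta-1}$. Concretely, for a tree $T$ with maximum degree $\Delta$, one knows $\rho_\alpha(T)<\alpha\Delta+2(1-\alpha)\sqrt{\Delta-1}$ already; so the work is entirely in the unicyclic case and in identifying when equality holds. I would first dispose of $\alpha=1$ separately, where $\rho_1(G)=\Delta$ and the inequality is the trivial $\Delta\le\Delta$; here equality holds for every unicyclic graph, which is consistent with the stated ``$0\le\alpha<1$'' restriction on the equality characterization.

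For $0\le\alpha<1$, I would argue as follows. If $G$ is a cycle $C_n$, then $A_\alpha(C_n)$ is $2\alpha I+(1-\alpha)A(C_n)$ with $A(C_n)$ circulant, so $\rho_\alpha(C_n)=2\alpha+2(1-\alpha)=2=\alpha\cdot2+2(1-\alpha)\sqrt{2-1}$, giving equality with $\Delta=2$. So assume $G$ is unicyclic but not a cycle; then $\Delta\ge 3$ and $G$ has at least one vertex of degree $\Delta\ge 3$ lying on or off the unique cycle. The key step is to embed $G$ (or rather a slightly enlarged finite graph dominating it) into a suitable Bethe-type tree or into the infinite $\Delta$-branching tree. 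The cleanest route: take the universal cover $\widetilde G$ of $G$; since $G$ is connected with exactly one cycle and not a cycle, $\widetilde G$ is an infinite tree with maximum degree $\Delta$, and covering maps do not increase the $\alpha$-spectral radius in the sense that $\rho_\alpha(G)\le \rho_\alpha(\widetilde G)$ (the Perron vector of $A_\alpha(G)$ lifts to a nonnegative bounded eigenfunction on $\widetilde G$, so $\rho_\alpha(G)$ is in the spectrum of the lift, hence $\le$ the top of the spectrum of $A_\alpha(\widetilde G)$). Then $\rho_\alpha(\widetilde G)\le \rho_\alpha(T_\Delta)=\alpha\Delta+2(1-\alpha)\sqrt{\Delta-1}$ because $\widetilde G$ is a subtree of the infinite $\Delta$-regular tree. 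This yields the inequality; since $\Delta\ge 3$, the relevant quantity $\alpha\Delta+2(1-\alpha)\sqrt{\Delta-1}$ is strictly larger than the value $2$ achieved by any cycle, and to get the strict inequality $\rho_\alpha(G)<\alpha\Delta+2(1-\alpha)\sqrt{\Delta-1}$ one observes $\widetilde G$ is a proper subgraph of $T_\Delta$ and uses strict monotonicity (carefully, since $T_\Delta$ is infinite, via a finite Bethe-tree truncation of large enough depth whose $\alpha$-spectral radius is both below $\alpha\Delta+2(1-\alpha)\sqrt{\Delta-1}$ and, for depth large, above $\rho_\alpha(G)$ is the wrong direction — instead one uses that $\widetilde G$ contains a vertex where some branch is missing, so a finite ball in $\widetilde G$ is a proper subgraph of the corresponding ball in $T_\Delta$, and $\rho_\alpha$ of that finite ball strictly dominates $\rho_\alpha(G)$ only in the limit, so the argument must be assembled as: $\rho_\alpha(G)=\lim_k \rho_\alpha(B_k(\widetilde G))$ is not what we want; rather $\rho_\alpha(G)\le\rho_\alpha(B_k(\widetilde G))<\rho_\alpha(B_k(T_\Delta))<\alpha\Delta+2(1-\alpha)\sqrt{\Delta-1}$, and one needs $\rho_\alpha(G)\le\rho_\alpha(B_k(\widetilde G))$ which holds since $B_k(\widetilde G)$ maps onto $G$ for the purposes of quotient-type bounds). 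The most robust alternative, which I would actually write up, avoids covers: apply Corollary~\ref{ss} and Lemma~\ref{switching}-type reductions, or directly the Rayleigh-quotient comparison against a Bethe tree as in \cite{NP}.

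Here is the alternative, more elementary plan that I expect to present. Use the row-sum/degree bound displayed just before Theorem~\ref{t1} is not sharp enough, so instead argue via a direct comparison. Let $x$ be the Perron vector of $A_\alpha(G)$ and let $\rho=\rho_\alpha(G)$. The eigenequation at each vertex $u$ reads $\rho x_u=\alpha d_u x_u+(1-\alpha)\sum_{v\sim u}x_v$. Suppose for contradiction $\rho\ge\alpha\Delta+2(1-\alpha)\sqrt{\Delta-1}$. Since $G$ is unicyclic, deleting one edge $e=u_0v_0$ of the unique cycle yields a spanning tree $T$ with maximum degree at most $\Delta$, and $\rho_\alpha(T)<\rho_\alpha(G)$ by strict subgraph monotonicity (when $G$ is not a cycle, $T$ is a proper spanning subgraph). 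But the known tree bound gives $\rho_\alpha(T)<\alpha\Delta+2(1-\alpha)\sqrt{\Delta-1}$, which does not immediately contradict $\rho_\alpha(G)\ge\alpha\Delta+2(1-\alpha)\sqrt{\Delta-1}$ — the gap between $\rho_\alpha(T)$ and $\rho_\alpha(G)$ could a priori be large. So this naive deletion fails, and the genuine difficulty is precisely bridging that gap. I would therefore instead use the following: when $G$ is unicyclic and not a cycle, apply Corollary~\ref{ss} repeatedly to ``unfold'' $G$ into a tree of the same maximum degree and strictly larger $\alpha$-spectral radius — wait, Corollary~\ref{ss} increases $\rho_\alpha$ but may increase $\Delta$. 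The correct tool is the well-known fact (provable by the same Rayleigh argument as \cite{NP}) that among all connected graphs with maximum degree $\Delta$ that are trees or unicyclic, the infinite $\Delta$-regular tree strictly dominates, with cycles (and only cycles, when $\Delta=2$) achieving equality in the $A_\alpha$ setting. Thus the main obstacle — and the technical heart of the write-up — is establishing $\rho_\alpha(G)<\rho_\alpha(T_\Delta)$ for unicyclic $G$ that is not a cycle, which I would do by: (i) extracting from $G$ a finite ``caterpillar-free'' comparison graph, (ii) bounding its Rayleigh quotient by that of a finite Bethe tree $B_{\Delta,k}$ via the inequality $x^\top A_\alpha(B_{\Delta,k})x\ge x^\top A_\alpha(G)x$ after a judicious relabeling/folding of vertices of $G$ onto levels of $B_{\Delta,k}$ that does not decrease off-diagonal contributions, and (iii) invoking the limit $\rho_\alpha(B_{\Delta,k})\nearrow\alpha\Delta+2(1-\alpha)\sqrt{\Delta-1}$ computed in \cite{NP}, together with the strictness coming from the fact that $G$ has a vertex of degree $<\Delta$ (true since $\sum d_u=2|E|=2n$ in a unicyclic graph, so not all degrees can be $\Delta\ge 3$), which makes at least one fold strict. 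Finally, I would separately verify the equality case: $\rho_\alpha(G)=\alpha\Delta+2(1-\alpha)\sqrt{\Delta-1}$ with $0\le\alpha<1$ forces $\Delta=2$ (since $\Delta\ge 3$ gives strict inequality by the above) and then $G$ connected unicyclic with $\Delta=2$ means $G$ is a cycle, for which equality indeed holds as computed. The main obstacle, to restate plainly, is the folding/embedding step (ii) making the Rayleigh comparison with a Bethe tree rigorous, especially tracking where strictness enters.
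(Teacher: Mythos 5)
There is a genuine gap: neither of your two routes actually establishes the inequality for a unicyclic graph. The universal-cover route has the inequality pointing the wrong way. Lifting the Perron vector of $A_\alpha(G)$ to $\widetilde G$ produces a bounded \emph{positive} eigenfunction with eigenvalue $\rho_\alpha(G)$, but on an infinite tree positive (even bounded) eigenfunctions exist for every eigenvalue \emph{at or above} the $\ell^2$-spectral radius, so this only shows $\rho_\alpha(G)\ge$ something, not $\rho_\alpha(G)\le\rho_\alpha(\widetilde G)$; indeed for finite graphs that are not trees one generally has $\rho(G)\ge\rho(\widetilde G)$ (e.g.\ $K_4$ is covered by $T_3$ and $3>2\sqrt2$), so the claimed monotonicity under covers is false in the direction you need. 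Your second route, comparing the Rayleigh quotient with a Bethe tree by a ``folding'' of vertices, is exactly the step you leave unspecified and flag as ``the technical heart''; as written there is no construction of the folding and no mechanism for the Cauchy--Schwarz-type control, so the argument does not close. The correct reduction of trees to unicyclic graphs (add an edge between two leaves and use strict subgraph monotonicity) you have implicitly, and your equality analysis and the $\alpha=1$ case are fine, but the core bound is not proved.

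The missing idea, which is what the paper uses (following Hu's argument for $\alpha=0$), is an orientation trick: orient the unique cycle cyclically and every other edge away from the cycle's side, i.e.\ from the endpoint farther from the cycle to the nearer one, so that every vertex $v_i$ has exactly one out-arc, to a vertex $v_i'$, and in-degree $d_G(v_i)-1$. Then $\sum_{uv\in E(G)}x_ux_v=\sum_i x_{v_i}x_{v_i'}$ and, counting multiplicities, $\sum_i x_{v_i'}^2=\sum_i (d_G(v_i)-1)x_{v_i}^2\le(\Delta-1)\sum_i x_{v_i}^2=\Delta-1$. Cauchy--Schwarz then gives
\[
\rho_\alpha(G)\le\alpha\Delta+2(1-\alpha)\sum_i x_{v_i}x_{v_i'}\le\alpha\Delta+2(1-\alpha)\sqrt{\textstyle\sum_i x_{v_i}^2\sum_i x_{v_i'}^2}\le\alpha\Delta+2(1-\alpha)\sqrt{\Delta-1},
\]
and equality in the last step forces $G$ to be $\Delta$-regular, hence a cycle. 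No Bethe trees, covers, or limits are needed. I recommend you rebuild the proof around this pairing identity.
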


\begin{proof}  If $\alpha=1$, then $A_{\alpha}(G)=D(G)$, and thus $\rho_{\alpha}(G)=\Delta=\alpha \Delta+2(1-\alpha)\sqrt{\Delta-1}$.

Suppose that $0\le \alpha<1$.

If $G$ is a tree, then we may add an edge between two vertices of degree one to form a unicyclic graph $G'$ with maximum degree $\Delta$, and for $0
\le \alpha<1$, by  \cite[Crollary 2.2, p.~38]{Mi}, we have
$\rho_{\alpha}(G)<\rho_{\alpha}(G')$. Thus we may assume that $G$ is a unicyclic graph. Let $x$ be the Perron vector of $A_{\alpha}(G)$.
Let  $C$ be the unique cycle of $G$ and $k$ its length. We label the vertices of $G$ such that $V(G)=\{v_1, \dots, v_n\}$ and $V(C)=\{v_1, \dots, v_k\}$.
For $w\in V(G)$, let $d_G(w, C)$ denote the minimum distance between $w$ and vertices  of $C$. We  orient  the  edges of $C$ as arcs $(v_1, v_2), \dots, (v_{k-1}, v_k), (v_k, v_1)$
and an edge $uv$ outside $C$  as $(u,v)$ if $d_G(u,C)>d_G(v,C)$. Now, for any $i=1, \dots, n$, there is a unique arc from $v_i$ to some other vertex $v_i'$.
Consider the multiple set $\{x_{v_1'}^2,\dots, x_{v_n'}^2\}$.  For $i=1, \dots, n$, the number of times of $x_{v_i}^2$ appearing in this multiple set is equal to the number
of arcs to $v_i$  under the above orientation, which is $d_G(v_i)-1$. Thus
\[
\sum_{i=1}^nx_{v_i'}^2=\sum_{i=1}^n(d_G(v_i)-1)x_{v_i}^2.
\]
Therefore
\begin{eqnarray}
\rho_{\alpha}(G)&=&x^\top A_{\alpha}(G)x \notag\\
&=&\alpha\sum_{u\in  V(G)}d_G(u)x_u^2+2(1-\alpha)\sum_{uv\in E(G)}x_ux_v \notag\\
&\le & \alpha\sum_{u\in  V(G)}\Delta x_u^2+2(1-\alpha)\sum_{uv\in E(G)}x_ux_v \label{e1}\\
&=&\alpha\Delta+2(1-\alpha)\sum_{i=1}^nx_{v_i}x_{v_i'} \notag\\
&\le & \alpha\Delta+2(1-\alpha)\sqrt{\sum_{i=1}^nx_{v_i}^2\sum_{i=1}^nx_{v_i'}^2} \label{e2}\\
&=& \alpha\Delta+2(1-\alpha)\sqrt{\sum_{i=1}^nx_{v_i'}^2} \notag\\
&=& \alpha\Delta+2(1-\alpha)\sqrt{\sum_{i=1}^n(d_G(v_i)-1)x_{v_i}^2} \notag \\
&\le & \alpha\Delta+2(1-\alpha)\sqrt{\sum_{i=1}^n(\Delta-1)x_{v_i}^2} \label{e3}\\
&=& \alpha\Delta+2(1-\alpha)\sqrt{\Delta-1}. \notag
\end{eqnarray}
In above inequalities, (\ref{e1}) and (\ref{e3}) follow from the fact that $d_G(u)\le \Delta$ for any $u\in V(G)$ and (\ref{e2}) follows from  Cauchy-Schwarz inequality.

If  $\rho_{\alpha}(G)=\alpha \Delta+2(1-\alpha)\sqrt{\Delta-1}$, then (\ref{e3}) is an  equality, implying that
 $G$ is $\Delta$-regular, and thus $\Delta=2$ and  $G=C$. Conversely, if $G$ is a cycle, then $\Delta=2$ and $\rho_{\alpha}(G)=2=\alpha \Delta+2(1-\alpha)\sqrt{\Delta-1}$.
\end{proof}

Let $G$ be a unicyclic graph with maximum degree $\Delta\ge 2$. By setting $\alpha=0, \frac{1}{2}$ in previous theorem respectively,
we have $\rho_0(G)\le 2\sqrt{\Delta-1}$ and
$\rho_{1/2}(G)\le \frac{1}{2}(\Delta+2\sqrt{\Delta-1})$ with either equality if and only if $G$ is a cycle.
The bound for $\rho_0(G)$  has been known in \cite{Hu}, and actually, we use techniques there.
Let $\mu(G)$ be the largest eigenvalue of the Laplacian matrix of a graph $G$. Note that $\mu(G)\le 2\rho_{1/2}(G)$
with equality if and only if $G$ is bipartite \cite{AM}. Thus $\mu(G)\le \Delta+2\sqrt{\Delta-1}$ with equality if and only if
$G$ is an even cycle, see \cite{Hu}.


If $G$ is a graph with maximum degree $\Delta$ and $0\le \alpha\le 1$, then $\rho_{\alpha}(G)\le \Delta$ with equality if and only if $\alpha=1$ or $G$ has a component that is regular of degree $\Delta$, see \cite[Proposition~11]{NP}.

For a connected irregular graph $G$ with $n$ vertices, maximum degree $\Delta$ and diameter $D$,
Cioab\v{a} \cite{Ci} proved a conjecture in \cite{CGNi} stated as
\[
\rho_{0}(G)<\Delta-\frac{1}{Dn},
\]
and Ning et al. \cite{NLL} showed  that  \[
2\rho_{1/2}(G)<2\Delta-\frac{1}{\left(D-\frac{1}{4}\right)n}.
\]

We follow the techniques in \cite{Ci,NLL} to prove the following result.


\begin{Theorem} \label{Gen} Let $G$ be a connected irregular graph on $n$ vertices with maximum degree $\Delta$ and diameter $D$. For $0\le \alpha<1$, we have
\[
\rho_\alpha(G)<\Delta-\frac{2(1-\alpha)}{(2D-\alpha)n}.\]
\end{Theorem}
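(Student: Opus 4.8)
The plan is to start from the Rayleigh--quotient identity that underlies all bounds of this type. Let $x$ be the unit Perron vector of $A_\alpha(G)$, where $0\le\alpha<1$. From $\rho_\alpha(G)=x^{\top}A_\alpha(G)x=\alpha\sum_v d_vx_v^2+2(1-\alpha)\sum_{uv\in E(G)}x_ux_v$ together with $\sum_{uv\in E(G)}(x_u-x_v)^2=\sum_v d_vx_v^2-2\sum_{uv\in E(G)}x_ux_v$, one gets the identity
\[
\Delta-\rho_\alpha(G)=\sum_{v\in V(G)}(\Delta-d_v)x_v^2+(1-\alpha)\sum_{uv\in E(G)}(x_u-x_v)^2 ,
\]
and the whole task is to bound this expression below by $\frac{2(1-\alpha)}{(2D-\alpha)n}$.

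First I would dispose of the easy cases. Pick a vertex $u$ with $x_u=\max_v x_v$, so $x_u^2\ge\frac1n$. A connected irregular graph is not complete, so $D\ge2$, and hence $\frac{2(1-\alpha)}{(2D-\alpha)n}<\frac1n$; thus if $d_u<\Delta$ then $\Delta-\rho_\alpha(G)\ge(\Delta-d_u)x_u^2\ge\frac1n$ already suffices, and likewise if $\Delta-\rho_\alpha(G)\ge\frac1n$. So assume $d_u=\Delta$ and $\Delta-\rho_\alpha(G)<\frac1n$. Since $G$ is irregular it has a vertex of degree $<\Delta$; choose such a vertex $w$ \emph{closest} to $u$, put $\ell=d_G(u,w)$ (so $1\le\ell\le D$), and let $u=p_0,p_1,\dots,p_\ell=w$ be a shortest path. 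Keeping in the identity only the deficiency term at $w$ and the edges of this path, and applying Cauchy--Schwarz to the telescoping sum $\sum_{i=0}^{\ell-1}(x_{p_i}-x_{p_{i+1}})=x_u-x_w$, we get
\[
\Delta-\rho_\alpha(G)\ \ge\ (\Delta-d_w)x_w^2+(1-\alpha)\sum_{i=0}^{\ell-1}(x_{p_i}-x_{p_{i+1}})^2\ \ge\ x_w^2+\frac{1-\alpha}{\ell}(x_u-x_w)^2 ,
\]
and Lemma~\ref{lem} with $a=1$, $b=\frac{1-\alpha}{\ell}$, $t=x_w$, $c=x_u$ gives $\Delta-\rho_\alpha(G)\ge\frac{(1-\alpha)x_u^2}{\ell+1-\alpha}\ge\frac{1-\alpha}{(D+1-\alpha)n}$.

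The hard part is that this straightforward estimate is not yet the claimed one: $\frac{1-\alpha}{D+1-\alpha}<\frac{2(1-\alpha)}{2D-\alpha}$, so the theorem really does improve on bounding the path length by $D$ and the deficiency by $1$ independently, and the ``effective denominator'' must be pushed from $D+1-\alpha$ down to $D-\frac\alpha2$. This is exactly where the techniques of Cioab\v{a}~\cite{Ci} and Ning et al.~\cite{NLL} have to be carried over to $A_\alpha$. The ingredient not yet used is that, by the minimality of $\ell$, \emph{every} vertex $p_0,\dots,p_{\ell-1}$ has degree exactly $\Delta$; writing out the eigenequations of $A_\alpha(G)$ at those vertices (equivalently, working with the Perron weights of the levels of a breadth-first search rooted at $u$) turns the situation into a tridiagonal, path-type recurrence for the entries along the path, whose extremal profile is the trigonometric one already met in the Bethe-tree computations of \cite{NP}. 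Substituting that profile back, together with the normalization $\sum_v x_v^2=1$ --- which makes $\sum_v x_v^2$ fall short of $n$ by an amount governed by the spread that $w$ forces on $x$ --- and then optimizing over $\ell\le D$ should deliver the sharp constant; an alternative packaging is to retain the ``sideways'' edges at the $p_i$ and at $w$ and fold them in via a \emph{weighted} Cauchy--Schwarz together with Lemma~\ref{lem}, so that the path effectively has length $\ell-\frac12$ rather than $\ell$.

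I expect the real work to be precisely this last optimization, performed uniformly in $\alpha\in[0,1)$ and checked against the known endpoints (Cioab\v{a}'s $\frac1{Dn}$ at $\alpha=0$, and the Ning et al. bound at $\alpha=\frac12$). Once the sharp constant is in hand, strictness is automatic: $G$ irregular forces $x$ to be non-constant, so at least one of the inequalities used along the way --- already $x_u^2\ge\frac1n$, which is strict unless $x$ is constant --- is strict, and we obtain $\rho_\alpha(G)<\Delta-\frac{2(1-\alpha)}{(2D-\alpha)n}$.
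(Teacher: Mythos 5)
Your setup is right: the identity $\Delta-\rho_\alpha(G)=\sum_v(\Delta-d_v)x_v^2+(1-\alpha)\sum_{uv\in E(G)}(x_u-x_v)^2$ is exactly the paper's starting point, the easy case $d_u<\Delta$ is handled essentially as in the paper, and your telescoping/Cauchy--Schwarz/Lemma~\ref{lem} computation giving $\frac{(1-\alpha)x_u^2}{\ell+1-\alpha}$ is the basic estimate the paper also uses. You also correctly diagnose that this only yields $\frac{1-\alpha}{(D+1-\alpha)n}$ and that the whole content of the theorem is improving the denominator to $D-\frac{\alpha}{2}$. But at precisely that point the proposal stops being a proof: the paragraph beginning ``The hard part\dots'' proposes a mechanism (eigenequation recurrences along the path, Bethe-tree profiles, a weighted Cauchy--Schwarz making the path ``effectively of length $\ell-\frac12$'') that is never carried out, and you say yourself that you ``expect the real work to be precisely this last optimization.'' That is the gap, and it is the entire difficulty of the theorem.

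For comparison, the paper's improvement does not come from any recurrence or profile optimization. It splits on where the deficient vertices sit relative to the maximal vertex $z$ (with $d_z=\Delta$). If some deficient vertex is at distance at most $D-1$ from $z$, the shorter path already gives $\frac{1-\alpha}{D-\alpha}x_z^2\ge\frac{2(1-\alpha)}{2D-\alpha}x_z^2$ and you are done. If every deficient vertex is at distance exactly $D$ and there are at least two of them, the two deficiency terms $x_u^2+x_v^2$ together with the two (partially shared) paths to $z$ produce the factor $2$. The genuinely hard case is a \emph{single} deficient vertex $w$ at distance exactly $D$: there the paper uses the row-sum identity $(\Delta-\rho_\alpha(G))\sum_i x_i=(\Delta-d_w)x_w$ to reduce to the case where $x_z/x_w$ is large, then forces $D=2$ by an estimate on $x_z$, and finishes with a case analysis on the neighbourhood structure that manufactures two edge-disjoint (or partially disjoint) $w$--$z$ paths to recover the factor $2$. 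None of this is present, or even correctly anticipated, in your sketch; your choice of $w$ as the deficient vertex closest to $u$ does not set up this case division, and the minimality of $\ell$ (all intermediate vertices of degree $\Delta$) is not actually used by the paper's argument. The final remark on strictness is also too quick: several of the paper's cases end with equality-permitting chains, and strictness has to be extracted from a specific strict step in each case, not from ``$x$ is non-constant'' alone.
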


\begin{proof} Let $x$ be  the Perron  vector of $A_{\alpha}(G)$. Let $x_z=\max\{x_i: i\in V(G)\}$. Then $x_z >\frac{1}{\sqrt{n}}$.

If $d_z<\Delta$, then from the eigenequation at $z$, we have
\begin{eqnarray*}
\rho_{\alpha}(G)x_z&=&\alpha d_zx_z+(1-\alpha)\sum_{j\in N_G(z)}x_j\\
&\le & \alpha d_zx_z+(1-\alpha)\sum_{j\in N_G(z)}x_z\\
&\le&\alpha(\Delta-1)x_z+(1-\alpha)(\Delta-1)x_z\\
&=&(\Delta-1)x_z,
\end{eqnarray*}
and thus $\rho_{\alpha}(G) \le \Delta-1<\Delta-\frac{2(1-\alpha)}{(2D-\alpha)n}$.

Assume that $d_z=\Delta$. Let $V_1=\{v\in V(G): d_v<\Delta\}$. Obviously, $V_1\neq \emptyset$.

Suppose that there is a vertex $u\in V_1$ such that $d_G(u,z)\le D-1$. Let $P=v_{0}v_{1}\dots v_{p}$ be a shortest path from $u$ to $z$, where $v_0=u$ and $v_p=z$. Then
\begin{eqnarray*}
\Delta-\rho_{\alpha}(G)&=&\Delta\sum_{i\in V(G)}x_{i}^2-x^{\top}A_{\alpha}x\\
&=&\sum_{i\in V(G)}(\Delta-d_i)x_{i}^{2}+(1-\alpha)\sum_{ij\in E(G)}(x_{i}-x_{j})^{2}\\
&\ge&x_{u}^2+(1-\alpha)\sum_{j=0}^{p-1}\left(x_{v_{j}}-x_{v_{j+1}}\right)^{2}.
\end{eqnarray*}
By Cauchy-Schwarz inequality and Lemma~\ref{lem}, we have 
\begin{eqnarray*}
\Delta-\rho_{\alpha}(G)
&\ge&x_{u}^2+\frac{(1-\alpha)(x_u-x_z)^2}{p}\\
&\ge& \frac{1-\alpha}{p+1-\alpha}x_z^2\\
&\ge & \frac{1-\alpha}{D-\alpha}x_z^2\\
&>& \frac{2(1-\alpha)}{(2D-\alpha)n},
\end{eqnarray*}
as desired.

Now assume that for every vertex $v\in V_1$, $d(v,z)=D$. We consider the cases $|V_1|\ge 2$ and $|V_1|=1$ separately.

Suppose first that $|V_1|\ge 2$. Let $u,v\in V_1$, and $P=v_{0}v_{1}\dots v_{D}$ be a shortest path from $u$ to $z$, where $v_0=u$ and $v_D=z$.  Let $Q$ be a shortest path from $v$ to $z$. Let $\ell=\min\{j: v_j\in V(Q)\}$. Then $\ell\in \{1,\dots, D\}$ and $\ell=d_G(u,v_\ell)=d_G(v,v_\ell)$. Let $Q_{v,v_\ell}$ be the sub-path of $Q$ from $v$ to $v_\ell$.
%
If $\ell\ne D$, i.e., $\ell\le D-1$, then by Cauchy-Schwarz inequality and Lemma~\ref{lem}, we have
\begin{eqnarray*}
\Delta-\rho_{\alpha}(G)&=&\sum_{i\in V(G)}(\Delta-d_i)x_{i}^{2}+(1-\alpha)\sum_{ij\in E(G)}(x_{i}-x_{j})^{2}\\
&\ge&x_{u}^2+x_{v}^2+(1-\alpha)\left(\sum_{j=0}^{\ell-1}\left(x_{v_{j}}-x_{v_{j+1}}\right)^{2}\right.\\
&&\left.+\sum_{kj\in E(Q_{v,v_\ell})}(x_{k}-x_{j})^{2}+\sum_{j=\ell}^{D-1}\left(x_{v_{j}}-x_{v_{j+1}}\right)^{2}\right)\\
&\ge&\left(x_{u}^2+\frac{(1-\alpha)\left(x_{u}-x_{v_{\ell}}\right)^{2}}{\ell}\right)+\left(x_{v}^2+\frac{(1-\alpha)\left(x_{v}-x_{v_{\ell}}\right)^{2}}{\ell}\right)\\
&&+\frac{(1-\alpha)\left(x_{v_\ell}-x_z\right)^{2}}{D-\ell}\\
&\ge& \frac{2(1-\alpha)}{\ell+1-\alpha}x_{v_\ell}^2+\frac{(1-\alpha)\left(x_{v_\ell}-x_{z}\right)^{2}}{D-\ell}\\
&\ge& \frac{2(1-\alpha)}{2D-\ell+1-\alpha}x_{z}^2\\
&\ge & \frac{2(1-\alpha)}{2D-\alpha}x_{z}^2\\
&>& \frac{2(1-\alpha)}{(2D-\alpha)n}.
\end{eqnarray*}
%
%
If $\ell=D$, i.e., $v_\ell=z$, then as above and noting that $D>1$, we have
\begin{eqnarray*}
\Delta-\rho_{\alpha}(G)&=&\sum_{i\in V(G)}(\Delta-d_i)x_{i}^{2}+(1-\alpha)\sum_{ij\in E(G)}(x_{i}-x_{j})^{2}\\
&\ge&x_{u}^2+x_{v}^2+(1-\alpha)\left(\sum_{j=0}^{D-1}\left(x_{v_{j}}-x_{v_{j+1}}\right)^{2}+\sum_{ij\in E(Q)}(x_{i}-x_{j})^{2}\right)\\
&\ge&\left(x_{u}^2+\frac{(1-\alpha)(x_{u}-x_{z})^{2}}{D}\right)+\left(x_{v}^2+\frac{(1-\alpha)(x_{v}-x_{z})^{2}}{D}\right)\\
&\ge& \frac{2(1-\alpha)}{D+1-\alpha}x_{z}^2\\ 
&>& \frac{2(1-\alpha)}{(2D-\alpha)n}.
\end{eqnarray*}
Thus, the result follows when $|V_1|\ge 2$.

Now assume  that  $|V_1|=1$. Let $w$ be a vertex of $G$ such that $x_w=\min\{x_i: i\in V(G)\}$. Since
\[
\Delta x_w>\rho_{\alpha}(G)x_w=\alpha d_{w}x_{w}+(1-\alpha)\sum_{j\in N_G(w)}x_j\ge d_wx_w,\]
we have $d_w<\Delta$, implying that $V_1=\{w\}$.

Since $\rho_{\alpha}(G)x_i=\alpha d_{i}x_{i}+(1-\alpha)\sum_{j\in N_G(i)}x_j$ for $i\in V(G)$, we have
\begin{eqnarray*}
\rho_{\alpha}(G)\sum_{i\in V(G)}x_i&=&\alpha\sum_{i\in V(G)} d_{i}x_{i}+(1-\alpha)\sum_{i\in V(G)} \sum_{j\in N_G(i)}x_j\\
&=&\sum_{i\in V(G)}d_ix_i=\Delta \sum_{i\neq w}x_i+d_wx_w,
\end{eqnarray*}
i.e., $(\Delta-\rho_{\alpha}(G))\sum_{i\in V(G)}x_i=(\Delta-d_w)x_w$, from which we get
\[
\Delta-\rho_{\alpha}(G)=\frac{(\Delta-d_w)x_w}{\sum_{i\in V(G)}x_i}>\frac{x_w}{nx_z}.
\]

Let $\gamma=\frac{x_z}{x_w}$. If  $\gamma\le\frac{2D-\alpha}{2(1-\alpha)}$,
then
\[
\Delta-\rho_{\alpha}(G)> \frac{1}{n\gamma}\ge\frac{2(1-\alpha)}{(2D-\alpha)n},
\]
as desired.

In the following, we assume that $\gamma>\frac{2D-\alpha}{2(1-\alpha)}$.

Since $d_G(w,z)=D$, we can choose a vertex $z'\in N_G(z)$   such that $d_G(w,z')=D-1$.  Let
$v_0\dots v_{D-1}$ be a shortest path from $w$ to $z'$ with $v_0=w$ and $v_{D-1}=z'$. Then as above, we have
\begin{eqnarray*}
\Delta-\rho_{\alpha}(G)&=&x_{w}^2+(1-\alpha)\sum_{ij\in E(G)}(x_{i}-x_{j})^{2}\\
&\ge&x_{w}^2+(1-\alpha)\sum_{j=0}^{D-2}\left(x_{v_{j}}-x_{v_{j+1}}\right)^{2}\\
&\ge&x_{w}^2+\frac{(1-\alpha)(x_w-x_{z'})^2}{D-1}\\
&\ge& \frac{1-\alpha}{D-\alpha}x_{z'}^2.
\end{eqnarray*}

If  $x_{z'}>\frac{1}{\sqrt{n}}$, then
\[
\Delta-\rho_{\alpha}(G)> \frac{1-\alpha}{(D-\alpha)n}
\ge  \frac{2(1-\alpha)}{(2D-\alpha)n},
\]
as desired.

Thus, we assume that there is a vertex $z'\in N_G(z)$  such that $x_{z'}\le \frac{1}{\sqrt{n}}$. Then
\[
\rho_{\alpha}(G)x_z=\alpha \Delta x_{z}+(1-\alpha)\sum_{i\in N_G(z)}x_i\le (\Delta-1+\alpha) x_{z}+(1-\alpha)\frac{1}{\sqrt{n}},
\]
which implies $\Delta-\rho_{\alpha}(G)\ge (1-\alpha)\left(1-\frac{1}{x_z\sqrt{n}}\right)$.

If $(1-\alpha)\left(1-\frac{1}{x_z\sqrt{n}}\right)> \frac{2(1-\alpha)}{(2D-\alpha)n}$, then we are done. Thus, we assume that
$(1-\alpha)(1-\frac{1}{x_z\sqrt{n}})\le \frac{2(1-\alpha)}{(2D-\alpha)n}$, i.e.,
\[
x_z\le \frac{(2D-\alpha)\sqrt{n}}{(2D-\alpha)n-2}.
\]
This, together with the fact that $(n-1)x_z^2+x_w^2\ge\sum_{i\in V(G)}x_i^2=1$, implies that

\[
\gamma^2=\left(\frac{x_z}{x_w}\right)^2\le \frac{1}{\frac{1}{x_z^2}-(n-1)}\le \frac{(2D-\alpha)^2n}{(2D-\alpha)(2D-\alpha-4)n+4}.
\]

If $D\ge 3$, then $(2D-\alpha)(2D-\alpha-4)n+4\ge (\alpha^2-8\alpha+12)n+4>4n$, and thus
\[
\gamma^2<\frac{(2D-\alpha)^2n}{4n}\le \frac{(2D-\alpha)^2}{4(1-\alpha)^2}<\gamma^2,\]
which is a contradiction.
Thus, it follows that $D=2$. 

By Lemma~\ref{lem} and the fact that $x_z^2>\frac{1}{n}$, 
we have $x_{w}^2+(1-\alpha)(x_w-x_z)^2>\frac{1-\alpha}{2-\alpha}x_z^2>\frac{2(1-\alpha)}{(2D-\alpha)n}$.

Suppose that there are  two paths, say $wuz$ and $wvz$, from $w$ to $z$. Note that $(x_w-t)^{2}+(t-x_z)^{2}\ge \frac{1}{2}(x_w-x_z)^2$. As earlier,   we have
\begin{eqnarray*}
\Delta-\rho_{\alpha}(G)&=&x_{w}^2+(1-\alpha)\sum_{ij\in E(G)}(x_{i}-x_{j})^{2}\\
&\ge&x_{w}^2+(1-\alpha)\left((x_w-x_u)^{2}+(x_{u}-x_z)^{2}+(x_w-x_{v})^{2}+(x_{v}-x_{z})^{2}\right)\\
&\ge&x_{w}^2+(1-\alpha)(x_w-x_z)^2\\
&>&\frac{2(1-\alpha)}{(2D-\alpha)n},
\end{eqnarray*}
as desired.

Thus, we  assume that there is a unique path, say $wuz$,  from $w$ to $z$. Let $N_1=N_G(z)\setminus \{u\}$ and  let $N_2$ the set of vertices of distance $2$ from $z$ except $w$. Then $V(G)\setminus\{z,u,w\}=N_1\cup N_2$, and   for every vertex $v\in N_1$, $d_G(v,w)=2$. We consider three cases.

\noindent \textbf{Case 1.} $u$ is adjacent to at least two vertices in $N_1$.

We choose $v,v'\in N_1\cap N_G(u)$. Since $d_u=d_z=\Delta$, there is a vertex in $N_1\setminus\{v,v'\}$, say $v_1$, such that $uv_1\notin E(G)$. Note that $d_G(v_1, w)=2$. Then there is a path, say $v_1v_2w$, connecting $v_1$ and $w$, where $v_2\in N_2$. Then as earlier, we have
\begin{eqnarray*}
\Delta-\rho_{\alpha}(G)&=&x_{w}^2+(1-\alpha)\sum_{ij\in E(G)}(x_{i}-x_{j})^{2}\\
&\ge&x_{w}^2+(1-\alpha)\left((x_{w}-x_{u})^{2}+(x_{u}-x_{z})^{2}+(x_{u}-x_{v})^{2}+(x_{v}-x_{z})^{2} \right.\\
&&+\left.(x_{u}-x_{v'})^{2}+(x_{v'}-x_{z})^{2}\right.\\
&&+\left.(x_{w}-x_{v_2})^{2}+(x_{v_2}-x_{v_1})^{2}+(x_{v_1}-x_{z})^{2}\right)\\
&\ge&x_{w}^2+(1-\alpha)\left((x_{w}-x_{u})^{2}+(x_{u}-x_{z})^{2}+\frac{(x_u-x_z)^2}{2}\right.\\
&&\left.+\frac{(x_u-x_z)^2}{2}+\frac{(x_{w}-x_{z})^{2}}{3}\right)\\
&=&x_{w}^2+(1-\alpha)\left((x_{w}-x_{u})^{2}+2(x_{u}-x_{z})^{2}+\frac{(x_{w}-x_{z})^{2}}{3}\right).
\end{eqnarray*}
By Lemma~\ref{lem}, 
\[
\Delta-\rho_{\alpha}(G)\ge x_{w}^2+(1-\alpha)(x_w-x_z)^2> \frac{2(1-\alpha)}{(2D-\alpha)n},
\]
as desired.

\noindent \textbf{Case 2.} $u$ is adjacent to  exactly one vertex in $N_1$.

Let $v$ be the unique vertex in $N_1\cap N_G(u)$.
Since $d_u=d_z=\Delta$, there is a vertex in $N_1\setminus\{v\}$, say $v_1$, such that $uv_1\notin E(G)$. Note that $d_G(v_1, w)=2$. Then there is a path, say $v_1v_2w$, connecting $v_1$ and $w$£¬ where $v_2\in N_2$. Since $w, u, z, v, v_1, v_2\in V(G)$, we have $n\ge 6$. If $n=6$, then  $\Delta=d_z=3$, $d_w=2$, and thus $2|E(G)|=5\Delta+d_w=17$, a contradiction. Thus $n\ge 7$. 

\noindent \textbf{Case 2.1.} $\Delta\ge 4$.

Since $|N_1|=d_z-1=\Delta-1\ge 3$, we may choose $s\in N_1\setminus\{v,v_1\}$. Since $D=2$, there is a path, say $ss'w$, connecting $s$ and $w$, where $s'\in N_2$.

If $s'=v_2$, then as above, we have
\begin{eqnarray*}
\Delta-\rho_{\alpha}(G)&=&x_{w}^2+(1-\alpha)\sum_{ij\in E(G)}(x_{i}-x_{j})^{2}\\
&\ge&x_{w}^2+(1-\alpha)\left((x_{w}-x_{u})^{2}+(x_{u}-x_{z})^{2}\right.\\ 
&&+\left.(x_{w}-x_{v_2})^{2}+(x_{v_2}-x_{v_1})^{2}+(x_{v_1}-x_{z})^{2}\right.\\
&&+\left.(x_{v_2}-x_{s})^{2}+(x_{s}-x_{z})^{2}\right)\\
&\ge&x_{w}^2+(1-\alpha)\left(\frac{(x_w-x_z)^2}{2}+(x_{w}-x_{v_2})^{2}+(x_{v_2}-x_z)^2\right)\\
&\ge&x_{w}^2+(1-\alpha)\left(\frac{(x_w-x_z)^2}{2}+\frac{(x_w-x_z)^2}{2}\right)\\
&=&x_{w}^2+(1-\alpha)(x_w-x_z)^2\\
&>& \frac{2(1-\alpha)}{(2D-\alpha)n},
\end{eqnarray*}
as desired.

If $s'\neq v_2$, then as above, we have
\begin{eqnarray*}
\Delta-\rho_{\alpha}(G)&=&x_{w}^2+(1-\alpha)\sum_{ij\in E(G)}(x_{i}-x_{j})^{2}\\
&\ge&x_{w}^2+(1-\alpha)\left((x_{w}-x_{u})^{2}+(x_{u}-x_{z})^{2}\right.\\ 
&&+\left.(x_{w}-x_{v_2})^{2}+(x_{v_2}-x_{v_1})^{2}+(x_{v_1}-x_{z})^{2}\right.\\
&&+\left.(x_{w}-x_{s'})^{2}+(x_{s'}-x_{s})^{2}+(x_{s}-x_{z})^{2}\right)\\
&\ge&x_{w}^2+(1-\alpha)\left(\frac{(x_w-x_z)^2}{2}+\frac{2(x_w-x_z)^2}{3}\right)\\
&\ge&x_{w}^2+\frac{7(1-\alpha)(x_w-x_z)^2}{6}\\
&>&x_{w}^2+(1-\alpha)(x_w-x_z)^2\\
&>&\frac{2(1-\alpha)}{(2D-\alpha)n},
\end{eqnarray*}
as desired.

\noindent \textbf{Case 2.2.} $\Delta=3$.

Suppose that $n\ge 8$. Then there are two vertices, say $s_1, s_2\in V(G)\setminus\{w,u,z,v,v_1,v_2\}$. Since $D=2$, $d_u=3$ and $d_w=2$,   we have  $d_G(s_1, w)=d_G(s_2, w)=2$, and thus  $s_1$ and $s_2$ can only be adjacent to $v_2$, which is impossible because  $d_{v_2}=3$. Thus $n=7$. Let $s$ be the vertex different from $w,u,z,v,v_1,v_2$. Then
 $E(G)=\{wu,uz,uv,vz,wv_2,v_{2}v_{1},v_{1}z,$ $v_{2}s,sv,sv_1\}$, see  Fig.~\ref{fig:1}.
\begin{figure}[htbp]
\centering
\includegraphics[width=5cm,height=4cm]{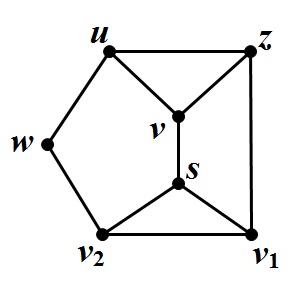}
\caption{The only possible graph $G$  in Case 2.2.}
\label{fig:1}
\end{figure}
Note that there is an automorphism $\sigma$ such that $\sigma(s)=z$.
By \cite[Proposition 16]{Ni1},  
$x_s=x_z$. Thus as above, we have
\begin{eqnarray*}
\Delta-\rho_{\alpha}(G)&=&x_{w}^2+(1-\alpha)\sum_{ij\in E(G)}(x_{i}-x_{j})^{2}\\
&\ge&x_{w}^2+(1-\alpha)\left((x_{w}-x_{u})^{2}+(x_{u}-x_{z})^{2}+(x_{w}-x_{v_2})^{2}+(x_{v_2}-x_{s})^{2}\right)\\
&\ge &x_{w}^2+(1-\alpha)(x_w-x_z)^2\\
&>&\frac{2(1-\alpha)}{(2D-\alpha)n},
\end{eqnarray*}
as desired.

\noindent \textbf{Case 3.} $u$ is not adjacent to any vertex in $N_1$.

Since $d_w<\Delta$, there are two vertices, say  $v_1$ and $v_2$ in $N_1$, such that some vertex $v^*$ in $N_2$ is adjacent to $w$, $v_1$ and $v_2$.
Thus, we have
\begin{eqnarray*}
\Delta-\rho_{\alpha}(G)&=&x_{w}^2+(1-\alpha)\sum_{ij\in E(G)}(x_{i}-x_{j})^{2}\\
&\ge&x_{w}^2+(1-\alpha)\left((x_{w}-x_{u})^{2}+(x_{u}-x_{z})^{2}+(x_w-x_{v^*})^{2}+(x_{v^*}-x_{v_1})^{2}\right.\\
&&+\left.(x_{v_1}-x_{z})^{2}+(x_{v^*}-x_{v_2})^{2}+(x_{v_2}-x_{z})^{2}\right)\\
&\ge & x_{w}^2+(1-\alpha)\left(\frac{(x_w-x_z)^2}{2}+(x_w-x_{v^*})^{2}+\frac{(x_{v^*}-x_z)^2}{2}+\frac{(x_{v^*}-x_z)^2}{2}\right)\\
&\ge&x_{w}^2+(1-\alpha)\left(\frac{(x_w-x_z)^2}{2}+\frac{(x_w-x_z)^2}{2}\right)\\
&=& x_w^2+(1-\alpha)(w_w-x_z)^2\\
&>&\frac{2(1-\alpha)}{(2D-\alpha)n},
\end{eqnarray*}
as desired.

Now by combining the above three cases, we complete the proof.
\end{proof}

Besides those considerations in \cite{Ci, NLL}, the  proof of  Theorem~\ref{Gen} needs more detailed analysis in the case of diameter two.

By Perron-Frobenius Theorem, if $\lambda_{\alpha} (G)$  is the least eigenvalue $A_{\alpha}(G)$, then
$\rho_{\alpha}(G)\ge -\lambda_{\alpha} (G)$. Thus, for a connected
irregular graph $G$ on $n$ vertices with maximum degree $\Delta$ and diameter $D$,
\[
\Delta+\lambda_{\alpha} (G)> \frac{2(1-\alpha)}{(2D-\alpha)n}.
\]
Recall that Alon and Sudakov \cite{AS} proved
that for a connected
graph $G$  on $n$ vertices  with maximum degree $\Delta$ and diameter $D$, if it  is not bipartite
(but possibly regular), then
\[
\Delta+\lambda_{0} (G)> \frac{1}{(D+1)n}.
\]

For  a connected irregular graph $G$ on $n$ vertices with maximum degree $\Delta$, minimum degree $\delta$, average $d$ and diameter $D$, Shi~\cite{Sh} showed that
\[
\rho_{0}(G)< \Delta-\frac{1}{(n-\delta)D-{D\choose 2}+\frac{1}{\Delta-d}}
\]
and
\[
\mu (G)<2\Delta-\frac{1}{(n-\delta)D-{D\choose 2}+\frac{1}{2(\Delta-d)}},
\]
where $\mu(G)$ is the the largest eigenvalue of the Laplacian matrix  of $G$. For a connected graph $G$, since $\mu(G)\le 2\rho_{1/2}(G)$, upper bounds for $2\rho_{1/2}(G)$ result in upper bounds for $\mu(G)$.

We remark that  the argument in \cite{Sh}  applies easily to prove the following result.
For completeness, however, we include a proof here.

\begin{Proposition}  \label{QH} Let $G$ be a connected irregular graph on $n$ vertices with maximum degree $\Delta$, minimum degree $\delta$, average degree $d$ and diameter $D$. For $0\le \alpha<1$, we have
\[\rho_\alpha(G)< \Delta-
\frac{1}{\frac{D(n-\delta)}{1-\alpha}-\frac{{D\choose 2}}{1-\alpha}+\frac{1}{\Delta-d}}
\]
\end{Proposition}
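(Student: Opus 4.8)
The plan is to follow the weighting idea of Shi~\cite{Sh}, but carry the factor $1-\alpha$ through the estimate so that the edge-contribution term $(1-\alpha)\sum_{ij\in E(G)}(x_i-x_j)^2$ is scaled correctly. Let $x$ be the Perron vector of $A_\alpha(G)$, normalized so that $\max_{i}x_i=x_z=1$ for a vertex $z$ with $d_z=\Delta$ (such a $z$ exists: if the maximum were attained only at vertices of degree $<\Delta$, the eigenequation at such a vertex would force $\rho_\alpha(G)\le\Delta-1$, and then the claimed bound holds trivially since $\Delta-1<\Delta-\tfrac{1}{\frac{D(n-\delta)}{1-\alpha}-\frac{\binom{D}{2}}{1-\alpha}+\frac{1}{\Delta-d}}$). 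As in the proof of Theorem~\ref{Gen}, write
\[
\Delta-\rho_\alpha(G)=\Delta\sum_{i\in V(G)}x_i^2-x^\top A_\alpha(G)x=\sum_{i\in V(G)}(\Delta-d_i)x_i^2+(1-\alpha)\sum_{ij\in E(G)}(x_i-x_j)^2,
\]
so it suffices to bound $\sum_i x_i^2$ from above in terms of this quantity. Set $s=\Delta-\rho_\alpha(G)$; since $\rho_\alpha(G)\ge d$ (average degree), we have $s\le\Delta-d$.

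Next I would control $\sum_i x_i^2$. First, the identity $(\Delta-\rho_\alpha(G))\sum_i x_i=(\Delta-d_w)x_w+\sum_{d_i<\Delta}(\Delta-d_i)x_i$-style bookkeeping (or more simply $\rho_\alpha(G)\sum_i x_i=\sum_i d_i x_i$) is not what I need; instead I bound each $x_i^2$ individually using a shortest path from $i$ to $z$. For a vertex $i$ at distance $\ell_i=d_G(i,z)\le D$ from $z$, a shortest path gives $1-x_i=x_z-x_i\le\sum_{j}(x_{v_j}-x_{v_{j+1}})$ along the path, so by Cauchy–Schwarz $(1-x_i)^2\le\ell_i\sum_{\text{path}}(x_{v_j}-x_{v_{j+1}})^2\le D\sum_{\text{path}}(x_{v_j}-x_{v_{j+1}})^2$. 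Summing $x_i^2\ge 1-(1-x_i)^2\cdot(\text{something})$ is the wrong direction; the right move (this is exactly Shi's counting) is: order the vertices $z=u_0,u_1,\dots,u_{n-1}$ so that each $u_k$ ($k\ge1$) is joined to its predecessor along a BFS/shortest-path tree rooted at $z$, and estimate $\sum_{k\ge 1}(1-x_{u_k}^2)=\sum_{k\ge1}(1-x_{u_k})(1+x_{u_k})\le 2\sum_{k\ge1}(1-x_{u_k})$. Using that an edge $e$ of the tree lies on at most $D$ of the root-to-vertex paths, and combining with the bound $\sum_{ij\in E(G)}(x_i-x_j)^2\le s/(1-\alpha)$ from the displayed identity, one gets $\sum_{k\ge1}(1-x_{u_k})\le\frac{D(n-1-\delta)+\binom{D}{2}\cdot(\dots)}{\dots}$ — here the careful accounting of how many times each tree-edge is used, weighted by path lengths, produces the $D(n-\delta)-\binom{D}{2}$ shape; this is the step I expect to copy most literally from~\cite{Sh}, only inserting the $1-\alpha$ factor. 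That yields $n-\sum_i x_i^2=\sum_i(1-x_i^2)\le\frac{2}{1-\alpha}\bigl(D(n-\delta)-\binom{D}{2}\bigr)s + (\text{lower-order})$, i.e. roughly $1=\sum_i x_i^2\ge n-\frac{1}{1-\alpha}\bigl(D(n-\delta)-\binom{D}{2}\bigr)\cdot 2s$.

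The remaining ingredient handles the vertex $w$ of minimum degree $\delta$ (the source of the irregularity), contributing the $\frac{1}{\Delta-d}$ term: from the global identity $(\Delta-\rho_\alpha(G))\sum_i x_i=\sum_i(\Delta-d_i)x_i\ge(\Delta-\delta)x_{w'}$ for an appropriate vertex, together with $\sum_i x_i\le n$ and $s\le\Delta-d$, one extracts an extra additive gain so that the two estimates combine, after clearing denominators, into
\[
1\le\sum_{i\in V(G)}x_i^2\le\frac{1}{s}\left(\frac{D(n-\delta)}{1-\alpha}-\frac{\binom{D}{2}}{1-\alpha}\right)^{-1}\!\!\!+\cdots
\]
— more precisely, rearranging gives $s\ge\Bigl(\frac{D(n-\delta)}{1-\alpha}-\frac{\binom{D}{2}}{1-\alpha}+\frac{1}{\Delta-d}\Bigr)^{-1}$, which is the asserted strict inequality once one checks that at least one of the intermediate inequalities is strict (it is, because $G$ is irregular so some $\Delta-d_i>0$ and the Perron vector is not constant). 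The main obstacle, then, is purely the bookkeeping in the middle paragraph: pinning down the exact constant $D(n-\delta)-\binom{D}{2}$ requires choosing the shortest-path tree and the summation order so that the edge-usage counts telescope correctly, and then verifying that inserting $1-\alpha$ only rescales the edge term and nothing else — the $\frac{1}{\Delta-d}$ piece, coming from a degree identity rather than an edge sum, must be checked to remain $\alpha$-free, exactly as in the $\alpha=0$ case of~\cite{Sh}.
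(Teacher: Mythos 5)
Your proposal correctly sets up the starting identity $\Delta-\rho_\alpha(G)=\sum_{i}(\Delta-d_i)x_i^2+(1-\alpha)\sum_{ij\in E(G)}(x_i-x_j)^2$ and correctly observes that only the edge term acquires the factor $1-\alpha$, but the step that actually produces the constant $\frac{D(n-\delta)}{1-\alpha}-\frac{\binom{D}{2}}{1-\alpha}+\frac{1}{\Delta-d}$ is left as a placeholder (``$\binom{D}{2}\cdot(\dots)$'', ``(lower-order)'', ``this is the step I expect to copy most literally''), and the one concrete mechanism you offer for it is wrong: in a BFS tree rooted at $z$, an edge does \emph{not} lie on at most $D$ of the root-to-vertex paths --- an edge incident to the root lies on the path to every vertex of the subtree below it, which can be far more than $D$ when the diameter is small --- so the proposed edge-usage count does not telescope to $D(n-\delta)-\binom{D}{2}$. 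Your account of the $\frac{1}{\Delta-d}$ term is also off: it does not come from the linear identity $\rho_\alpha(G)\sum_i x_i=\sum_i d_ix_i$, but from the quadratic term $\sum_i(\Delta-d_i)x_i^2\ge(n\Delta-2m)x_w^2=n(\Delta-d)x_w^2$ at the \emph{minimum}-entry vertex $w$, combined with Lemma~\ref{lem}.

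The argument needed to close the gap is organized quite differently from your sketch. With $x$ the unit Perron vector, $x_z=\max_i x_i$, $x_w=\min_i x_i$, and $v_0\dots v_p$ a shortest $w$--$z$ path, one sets $\beta=\bigl(\frac{p(n-d_w)-\binom{p}{2}}{1-\alpha}+\frac{1}{\Delta-d}\bigr)^{-1}$ and argues by cases: if any of $x_w^2$, $x_{v_\ell}^2$ ($1\le\ell\le p$), or $\sum_{v\in N_G(w)}x_v^2$ exceeds an explicit threshold, then $\Delta-\rho_\alpha(G)>\beta$ follows directly from the chain $\Delta-\rho_\alpha(G)\ge(n\Delta-2m)x_w^2+(1-\alpha)\frac{(x_w-x_{v_\ell})^2}{\ell}\ge\frac{(1-\alpha)(n\Delta-2m)}{\ell(n\Delta-2m)+1-\alpha}\,x_{v_\ell}^2$ (Cauchy--Schwarz along the path plus Lemma~\ref{lem}); otherwise the remaining $n-p-d_w+1$ vertices satisfy $(n-p-d_w+1)x_z^2\ge 1-(\text{sum of thresholds})$, and adding up those thresholds --- the sum $\sum_\ell \ell$ over the path is what produces $\binom{p}{2}$, and the count of leftover vertices produces $p(n-d_w)$ --- yields $\Delta-\rho_\alpha(G)\ge\beta$, with strictness obtained by a short contradiction from the equality condition of Lemma~\ref{lem}; finally $p(n-d_w)-\binom{p}{2}\le D(n-\delta)-\binom{D}{2}$. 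None of this bookkeeping is present in your proposal, so as written it does not constitute a proof.
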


\begin{proof}
Let $x$ is  the Perron  vector of $A_{\alpha}(G)$. Let $x_z=\max\{x_i: i\in V(G)\}$ and  $x_w=\min\{x_i: i\in V(G)\}$. Let $v_0\dots v_p$ be a shortest path connecting $w$ and $z$, where $v_0=w$ and $v_p=z$. Now for $\ell=1,\dots, p$, by Cauchy-Schwarz inequality,
\[
\sum_{j=0}^{\ell-1}\left(x_{v_j}-x_{v_{j+1}}\right)^{2}\ge \frac{\left(x_w-x_{v_\ell}\right)^2}{\ell}.
\]
As in the proof of Theorem~\ref{Gen}, for $\ell=1,\dots, p$, we have  by Lemma~\ref{lem} that
\begin{equation} \label{ya}
\begin{split}
\Delta-\rho_{\alpha}(G)
\ge& (n\Delta-2m)x_{w}^2+(1-\alpha)\sum_{ij\in E(G)}(x_{i}-x_{j})^{2}\\
\ge &(n\Delta-2m)x_{w}^2+(1-\alpha)\frac{\left(x_w-x_{v_\ell}\right)^2}{\ell}\\
\ge & \frac{(1-\alpha)(n\Delta-2m)x_{v_\ell}^2}{\ell (n\Delta-2m)+1-\alpha}.
\end{split}
\end{equation}

It is known that $D+\Delta\le n+1$. Since $\delta<\Delta$, we have  $D+\delta\le n$.
Since  $1\le p\le D$ and $\delta\le d_w\le \Delta$, we have
\[
p(n-d_w)-{p\choose 2}\le p(n-\delta)-{p \choose 2}\le D(n-\delta)-{D \choose 2}.
\]
Let \[
\beta=\frac{1}{\frac{p(n-d_w)-{p\choose 2}}{1-\alpha}+\frac{1}{\Delta-d}}.
 \]
Then  it suffices to show that  $\Delta-\rho_{\alpha}(G)>\beta$.

If $x_w^2>\frac{\beta}{n\Delta-2m}$, then $\Delta-\rho_{\alpha}(G)\ge(n\Delta-2m)x_{w}^2>\beta$.

If $x_{v_\ell}^2>\frac{\ell (n\Delta-2m)+1-\alpha}{(1-\alpha)(n\Delta-2m)}\beta$ for some $\ell=1,\dots,p$, then from (\ref{ya}) we have
$\Delta-\rho_{\alpha}(G) >\beta$.

If $\sum_{v\in N_G(w)}x_v^2>\frac{d_w(1-\alpha)+n\Delta-2m}{(1-\alpha)(n\Delta-2m)}\beta$, then by Lemma~\ref{lem},
\begin{eqnarray*}
\Delta-\rho_{\alpha}(G)&\ge& (n\Delta-2m)x_{w}^2+(1-\alpha)\sum_{v\in N_G(w)}(x_{v}-x_{w})^{2}\\
&\ge&\sum_{v\in N_G(w)}\left(\frac{(n\Delta-2m)x_{w}^2}{d_w}+(1-\alpha)(x_{v}-x_{w})^2\right)\\
&\ge&\sum_{v\in N_G(w)}\frac{(1-\alpha)(n\Delta-2m)}{d_w(1-\alpha)+(n\Delta-2m)}x_{v}^2>\beta.
\end{eqnarray*}

Thus, we can assume that $x_w^2\le \frac{\beta}{n\Delta-2m}$, $\sum_{v\in N_G(w)}x_v^2\le\frac{d_w(1-\alpha)+n\Delta-2m}{(1-\alpha)(n\Delta-2m)}\beta$ and $x_{v_\ell}^2\le\frac{\ell (n\Delta-2m)+1-\alpha}{(1-\alpha)(n\Delta-2m)}\beta$ for $\ell=1,\dots,p$.
Then
\begin{eqnarray*}
(n-p-d_w+1)x_z^2&\ge& 1-x_w^2-\sum_{\ell=2}^{p-1}x_{v_\ell}^2-\sum_{v\in N_G(w)}x_{v}^2\\
&\ge & 1-\frac{\beta}{n\Delta-2m}-\frac{\sum_{\ell=2}^{p-1}\left(\ell (n\Delta-2m)+1-\alpha\right)}{(1-\alpha)(n\Delta-2m)}\beta\\
&&-\frac{d_w(1-\alpha)+n\Delta-2m}{(1-\alpha)(n\Delta-2m)}\beta\\
&=&1-\left(\frac{d_w+p-1}{n\Delta-2m}+\frac{\binom{p}{2}}{1-\alpha}\right)\beta.
\end{eqnarray*}
with equality only if $x_w^2= \frac{\beta}{n\Delta-2m}$.
From (\ref{ya}), we have
\[
\Delta-\rho_{\alpha}(G)
\ge \frac{(1-\alpha)(n\Delta-2m)\left(1-\left(\frac{d_w+p-1}{n\Delta-2m}+\frac{\binom{p}{2}}{1-\alpha}\right)\beta\right)}{(p(n\Delta-2m)+1-\alpha)(n-p-d_w+1)}
=\beta.
\]
Suppose that $\Delta-\rho_{\alpha}(G)=\beta$. Then 
by Lemma~\ref{lem}, we have  $x_w= \frac{1-\alpha}{p(n\Delta-2m)+1-\alpha}x_{v_p}$.  Note that we also have $x_w^2= \frac{\beta}{n\Delta-2m}$. Thus
\[
\left(\frac{p(n\Delta-2m)+1-\alpha}{1-\alpha}\right)^2\frac{\beta}{n\Delta-2m}=x_{v_p}^2\le\frac{p (n\Delta-2m)+1-\alpha}{(n\Delta-2m)(1-\alpha)}\beta,
\]
implying that  $p(n\Delta-2m)\le 0$,
a contradiction.
Therefore  $\Delta-\rho_{\alpha}(G)>\beta$.
\end{proof}

For   a $k$-connected irregular graph $G$ on $n\ge 3$ vertices with $m$ edges and maximum degree $\Delta$,
Chen and Hou \cite{CH} (see also Shiu et al.~\cite{Shiu}) showed that
\[
\rho_{0}(G)<\Delta-\frac{(n\Delta-2m)k^2}{(n\Delta-2m)(n^2-(\Delta-k+2)(n-k))+nk^2},
\]
and Shiu et al. \cite{Shiu} showed  that
\[
2\rho_{1/2}(G)<2\Delta-\frac{(n\Delta-2m)k^2}{2(n\Delta-2m)(n^2-(\Delta-k+2)(n-k))+nk^2}.
\]

The argument in  \cite{CH,Shiu} leads easily to the following result. For completeness, however, we include a proof here.

\begin{Proposition}  \label{Shao} Let $G$ be a $k$-connected irregular graph on $n$ vertices with $m$ edges, maximum degree $\Delta$. For $0\le \alpha<1$, we have
\[
\rho_\alpha(G)<\Delta-\frac{(1-\alpha)(n\Delta-2m)k^2}{(n\Delta-2m)(n^2-(\Delta-k+2)(n-k))+(1-\alpha)nk^2}.
\]
\end{Proposition}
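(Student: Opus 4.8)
The plan is to run the argument of Proposition~\ref{QH} almost verbatim, replacing the single shortest path between a minimum-entry and a maximum-entry vertex of the Perron vector by a \emph{fan} of $k$ internally disjoint paths supplied by Menger's theorem. Write $m=|E(G)|$ and let $x$ be the unit Perron vector of $A_\alpha(G)$. Pick $w$ with $x_w=\min\{x_i:i\in V(G)\}$ and $z$ with $x_z=\max\{x_i:i\in V(G)\}$. The starting point is
\[
\Delta-\rho_\alpha(G)=\sum_{i\in V(G)}(\Delta-d_i)x_i^2+(1-\alpha)\sum_{ij\in E(G)}(x_i-x_j)^2\ge (n\Delta-2m)x_w^2+(1-\alpha)\sum_{ij\in E(G)}(x_i-x_j)^2 .
\]
Before using this I would record a few facts forced by irregularity and by $k$-connectivity: since $G$ is irregular, $n\Delta-2m=\sum_i(\Delta-d_i)>0$, and the Perron vector is not constant (a constant vector would make the degree vector a scalar multiple of the all-ones vector), so $x_w<x_z$ and $x_z^2>\tfrac1n$; by \cite[Proposition~11]{NP} we have $\rho_\alpha(G)<\Delta$, while the eigenequation at $w$ gives $\rho_\alpha(G)\ge d_w$, hence $d_w\le\Delta-1$; and $\delta(G)\ge k$.

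Next I would invoke Menger's theorem to obtain $k$ internally vertex-disjoint $w$–$z$ paths $P_1,\dots,P_k$ of lengths $\ell_1,\dots,\ell_k$; being internally disjoint they are pairwise edge-disjoint (an edge common to two of them would be $wz$, and only one path can be that single edge). Cauchy–Schwarz along $P_t$ gives $\sum_{ij\in E(P_t)}(x_i-x_j)^2\ge \frac{(x_w-x_z)^2}{\ell_t}$, and a second application gives $\sum_{t=1}^k\frac1{\ell_t}\ge \frac{k^2}{\sum_{t=1}^k\ell_t}$, so
\[
\Delta-\rho_\alpha(G)\ge (n\Delta-2m)x_w^2+(1-\alpha)\frac{k^2}{\sum_t\ell_t}(x_w-x_z)^2 .
\]
Applying Lemma~\ref{lem} with $a=n\Delta-2m>0$, $b=(1-\alpha)k^2/\sum_t\ell_t>0$, $t=x_w$, $c=x_z$ then yields
\[
\Delta-\rho_\alpha(G)\ge \frac{(1-\alpha)(n\Delta-2m)k^2}{\left(\sum_t\ell_t\right)(n\Delta-2m)+(1-\alpha)k^2}\,x_z^2 .
\]

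A crude substitution ($x_z^2>\tfrac1n$ together with the trivial $\sum_t\ell_t\le n+k-2$) is not strong enough, so — exactly as in Proposition~\ref{QH} — I would set $\beta$ equal to the target right-hand side and run a dichotomy. For a vertex $v$ lying at distance $j$ from $w$ along $P_t$, the sub-path of length $j$ gives $\Delta-\rho_\alpha(G)\ge\frac{(1-\alpha)(n\Delta-2m)}{j(n\Delta-2m)+1-\alpha}x_v^2$, and similarly one has a bound in terms of $\sum_{v\in N_G(w)}x_v^2$; if $x_w^2$, or any such $x_v^2$, or $\sum_{v\in N_G(w)}x_v^2$, exceeds the corresponding threshold, the relevant estimate already gives $\Delta-\rho_\alpha(G)>\beta$. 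Otherwise all of these squares are small, and $\sum_i x_i^2=1$ forces $x_z^2$ above $\tfrac1n$ by a controlled amount; feeding this back into the displayed inequality gives $\Delta-\rho_\alpha(G)\ge\beta$, and the equality cases are excluded by irregularity (they would force $\left(\sum_t\ell_t\right)(n\Delta-2m)\le 0$, as in the last lines of the proof of Proposition~\ref{QH}).

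The main obstacle is the combinatorial optimization in this last step: one must control $\sum_t\ell_t$ and $\sum_t\binom{\ell_t}{2}$ (the latter entering through the thresholds of the deep path-vertices in the identity $\sum_i x_i^2=1$) together with $d_w$, using $d_w\le\Delta-1$, $\delta(G)\ge k$, and the internal-disjointness of the $P_t$ (so that the vertices other than $w$, $z$ and the neighbours of $w$ that lie off all the paths form a set of nonnegative size). One has to check that neither spreading nor concentrating the lengths $\ell_t$, nor the possibility that exactly one $P_t$ is the edge $wz$, beats the claimed bound, and keep track of which path-internal vertices are simultaneously neighbours of $w$; the worst admissible length profile is precisely the one producing the factor $n^2-(\Delta-k+2)(n-k)$ in the denominator. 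Everything outside this optimization is the verbatim template of Proposition~\ref{QH} and of \cite{CH,Shiu}.
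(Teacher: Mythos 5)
Your skeleton is the right one (it is the template of \cite{CH,Shiu} that the paper follows: Menger's fan of $k$ internally disjoint $w$--$z$ paths, the $(n\Delta-2m)x_w^2$ term, Lemma~\ref{lem}, and a threshold dichotomy), but the proposal has a genuine gap exactly where you flag ``the main obstacle'': the combinatorial control of $\sum_t\ell_t$ is never carried out, and the ingredients you list for it are not the ones that produce the stated bound. The paper first splits on whether $d_z=\Delta$ (if $d_z<\Delta$ the eigenequation at $z$ gives $\rho_\alpha(G)\le\Delta-1<\Delta-\beta$ since $\beta<1$), and it is precisely the fact that $z$ has $\Delta$ neighbours, only $k$ of which can serve as penultimate vertices of the internally disjoint paths, that yields the key estimate
\[
\sum_{s=1}^{k}\sum_{ij\in E(Q_s)}(x_i-x_j)^2\ \ge\ \frac{k^2}{\,n-\Delta+2k-2\,}\,(x_w-x_z)^2 ,
\]
i.e.\ $\sum_t\ell_t\le n-\Delta+2k-2$. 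You instead propose to use $d_w\le\Delta-1$ and $\delta\ge k$ and concede that the trivial bound $\sum_t\ell_t\le n+k-2$ is insufficient; without the reduction to $d_z=\Delta$ the factor $n-\Delta+2k-2$ (which, after the final bookkeeping, becomes $n^2-(\Delta-k+2)(n-k)=(n-k)(n-\Delta+2k-2)+k^2$ in the denominator) is not available.

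The second discrepancy is in the dichotomy itself. You propose to mimic Proposition~\ref{QH} by thresholding every internal path vertex, which would drag in $\sum_t\binom{\ell_t}{2}$-type terms and a different (messier) denominator; the paper's dichotomy is much leaner: after the $x_w^2$ threshold it passes directly to $x_z^2$ via Lemma~\ref{lem}, handles $k=1$ with just $(n-1)x_z^2\ge 1-x_w^2$, and for $k\ge 2$ thresholds only the single quantity $\sum_{i=1}^{k-1}x_{v_i}^2$ over $k-1$ neighbours of $w$ distinct from $z$, then uses $(n-k)x_z^2\ge 1-x_w^2-\sum_{i=1}^{k-1}x_{v_i}^2$. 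As written, your argument does not close: the ``worst admissible length profile'' claim is asserted, not proved, and the optimization you describe is not the one whose answer is the stated constant. To repair the proposal you should (i) insert the $d_z<\Delta$ case at the start, (ii) prove or cite the displayed path-sum inequality, and (iii) replace the per-vertex thresholds by the two-threshold scheme above.
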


\begin{proof} Let \[
\beta=\frac{(1-\alpha)(n\Delta-2m)k^2}{(n\Delta-2m)(n^2-(\Delta-k+2)(n-k))+(1-\alpha)nk^2}.\]
Note that
$n^2-(\Delta-k+2)(n-k)\ge nk>k^2$. Then $\beta<1$.

Let $x$ is  the Perron  vector of $A_{\alpha}(G)$. Let $x_z=\max\{x_i: i\in V(G)\}$. Then $x_z >\frac{1}{\sqrt{n}}$.

If $d_z<\Delta$, then as in the proof of Theorem~\ref{Gen}, we have
$\rho_{\alpha}(G) \le \Delta-1<\Delta-\beta$.

Assume that $d_z=\Delta$. Let $w$ be a vertex of $G$ such that $x_w=\min\{x_i: i\in V(G)\}$. From the eigenequation at $w$, we have
$d_w<\Delta$.  As in \cite{CH,Shiu},
%
\[
\sum_{s=1}^{k}\sum_{ij\in E(Q_s)}(x_i-x_j)^2
\ge \frac{k^2}{n-\Delta+2k-2}(x_w-x_z)^2.
\]
Thus, by the argument in Theorem~\ref{Gen}, we have
\begin{eqnarray*}
\Delta-\rho_{\alpha}(G)
& \ge & (n\Delta-2m)x_{w}^2+\frac{(1-\alpha)k^2}{n-\Delta+2k-2}(x_w-x_z)^2.
\end{eqnarray*}
Note that $x_w\ne x_z$ as $G$ is irregular.

If $x_w^2\ge\frac{\beta}{n\Delta-2m}$, then  $\Delta-\rho_{\alpha}(G)
> (n\Delta-2m)x_w^2
\ge \beta$,
as desired.

Assume that $x_w^2<\frac{\beta}{n\Delta-2m}$. By Lemma~\ref{lem},
\begin{equation} \label{wa}
\Delta-\rho_{\alpha}(G)\ge \frac{(1-\alpha)(n\Delta-2m)k^2}{(n\Delta-2m)(n-\Delta+2k-2)+(1-\alpha)k^2}x_z^2.
\end{equation}

If  $k=1$, then
$(n-1)x_z^2\ge 1-x_w^2>1-\frac{\beta}{n\Delta-2m}$, and thus from (\ref{wa}), we have
\begin{eqnarray*}
\Delta-\rho_{\alpha}(G)
&>& \frac{(1-\alpha)(n\Delta-2m)}{(n\Delta-2m)(n-\Delta)+1-\alpha}\cdot\frac{1}{n-1}\cdot\left(1-\frac{\beta}{n\Delta-2m}\right)\\
&=&\beta \cdot\frac{(n\Delta-2m)\left(\frac{n^2}{n-1}-(\Delta+1)\right)+1-\alpha}{(n\Delta-2m)(n-\Delta)+1-\alpha}\\
&>&\beta,
\end{eqnarray*}
as desired.

Now assume that $k\ge2$.
Since $d_w\ge k$, we may choose $k-1$ vertices, say $v_1,\dots, v_{k-1}$,  in $N_G(w)$ different from $z$. 
Then as above, we have
\begin{eqnarray*}
\Delta-\rho_{\alpha}(G)
&\ge&(n\Delta-2m)x_{w}^2+(1-\alpha)\sum_{i=1}^{k-1}(x_{v_i}-x_{w})^{2}\\
&=&\sum_{i=1}^{k-1}\left(\frac{n\Delta-2m}{k-1}x_{w}^2+(1-\alpha)(x_{v_i}-x_{w})^{2}\right)\\
&\ge&\sum_{i=1}^{k-1}\frac{(1-\alpha)(n\Delta-2m)}{n\Delta-2m+(1-\alpha)(k-1)}x_{v_i}^2\\
&=&\frac{(1-\alpha)(n\Delta-2m)}{n\Delta-2m+(1-\alpha)(k-1)}\sum_{i=1}^{k-1}x_{v_i}^2.
\end{eqnarray*}

If $\sum_{i=1}^{k-1}x_{v_i}^2>\frac{n\Delta-2m+(1-\alpha)(k-1)}{(1-\alpha)(n\Delta-2m)}\beta$, then
$\Delta-\rho_{\alpha}(G)>\beta$, 
as desired.

Assume that $\sum_{i=1}^{k-1}x_{v_i}^2\le \frac{n\Delta-2m+(1-\alpha)(k-1)}{(1-\alpha)(n\Delta-2m)}\beta$. Recall that $x_w^2<\frac{\beta}{n\Delta-2m}$.
Then
\[
(n-k)x_z^2\ge 1-x_w^2-\sum_{i=1}^{k-1}x_{v_i}^2> 1-\frac{n\Delta-2m+(1-\alpha)k}{(1-\alpha)(n\Delta-2m)}\beta.
\]
Therefore, from (\ref{wa}), we have
\begin{eqnarray*}
\Delta-\rho_{\alpha}(G)&\ge& \frac{(1-\alpha)(n\Delta-2m)k^2}{(n\Delta-2m)(n-\Delta+2k-2)+(1-\alpha)k^2}x_z^2\\
&>&\frac{(1-\alpha)(n\Delta-2m)k^2}{(n\Delta-2m)(n-\Delta+2k-2)+(1-\alpha)k^2} \\
&&\cdot\frac{1}{n-k}\left(1-\frac{n\Delta-2m+(1-\alpha)k}{(1-\alpha)(n\Delta-2m)}\beta\right)\\
&=&\beta,
\end{eqnarray*}
as desired.
\end{proof}

By direct check, the upper bound in Theorem~\ref{Gen} is less than or equal to the upper  bound in Proposition~\ref{QH} if and only if
\[
(\Delta-d)\left(2D\delta+D(D-1)-\alpha n\right)\le 2(1-\alpha),
\]
the upper bound in Theorem~\ref{Gen} is less than or equal to the upper bound in Proposition~\ref{Shao} if and only if
\[
2n^2+\frac{2(1-\alpha)nk^2}{n\Delta-2m}\ge n(2D-\alpha)k^2+2(\Delta-k+2)(n-k),
\]
and the upper bound in Proposition~\ref{QH} is less than or equal to the upper bound in Proposition~\ref{Shao} if and only if
\[
k^2D(2n-2\delta-D+1)\le 2n^2-2(\Delta-k+2)(n-k).
\]

For a graph $G$ with $u\in V(G)$, let $R_u=V(G)\setminus N_G(u)$. The following result concerning the domination number unifies the results in  \cite{SAH,XZ} on spectral radius and signless Laplacian spectral radius of a graph. We note that the bound is independent of the parameter $\alpha$.

\begin{Theorem} Let $G$ be a graph with $n$ vertices and domination number $\gamma$, where $1\le\gamma \le n-1$. For $0\le \alpha<1$, we have $\rho_{\alpha}(G)\le n-\gamma$ with equality if and only if
$G\cong K_{n-\gamma+1}\cup(\gamma-1)K_1$ or when $\gamma\ge 2$ and $n-\gamma$ is even, $G\cong\overline{\frac{n-\gamma+2}{2}K_2}\cup(\gamma-2)K_1$.
\end{Theorem}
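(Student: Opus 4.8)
The plan is to bound $\rho_\alpha(G)$ above by $n-\gamma$ using the graph obtained by ``completing'' a dominated structure, and then to extract the equality cases from the rigidity in the relevant inequalities. First I would fix a minimum dominating set $S=\{u_1,\dots,u_\gamma\}$ and observe that every vertex outside $S$ is adjacent to some $u_i$; in particular $G$ is a spanning subgraph of the graph $H$ in which each vertex of $V(G)\setminus S$ is joined to all of $S$ and the set $V(G)\setminus S$ is turned into a clique, while $S$ stays independent. Since $G$ is a (not necessarily proper, but subgraph) of such an $H$, and using the monotonicity statement quoted in the preliminaries (for a connected $G$, $\rho_\alpha(H')<\rho_\alpha(G)$ when $H'$ is a proper subgraph), it suffices to bound $\rho_\alpha$ of the ``extremal'' candidates. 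More robustly, I would argue directly: a dominating set of size $\gamma$ forces $G$ to contain at most $\binom{n-\gamma+1}{2}+\dots$ structure, but the cleanest route is the eigenvalue bound itself.

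Concretely, let $x$ be the Perron vector of $A_\alpha(G)$ normalized so that $\max_v x_v=1$, attained at a vertex $z$. If $z\notin S$ then $z$ has at most $n-\gamma$ neighbors among $V(G)\setminus S$ plus possibly some in $S$; in fact $d_z\le n-1$, and the right bookkeeping is: write the eigenequation $\rho_\alpha x_z=\alpha d_z x_z+(1-\alpha)\sum_{v\in N_G(z)}x_v\le \alpha d_z+(1-\alpha)d_z=d_z$. This only gives $\rho_\alpha\le d_z\le n-1$, which is too weak, so instead I would use the interlacing/quotient-matrix idea: partition $V(G)$ into $S$ and $T=V(G)\setminus S$ with $|T|=n-\gamma$, and note that since $S$ can be taken independent in the worst case, the relevant block structure of $A_\alpha$ has a quotient whose spectral radius is at most $n-\gamma$. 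Precisely, every vertex of $T$ has at most $n-\gamma-1$ neighbors in $T$ and the contribution of $S$-neighbors is controlled because $S$-vertices have small Perron weight. The key inequality I would aim for is
\[
\rho_\alpha(G)\;=\;x^\top A_\alpha(G)\,x\;\le\;n-\gamma,
\]
by showing $x^\top A_\alpha(G)x=\alpha\sum_u d_u x_u^2+2(1-\alpha)\sum_{uv\in E}x_ux_v$ is at most $n-\gamma$ after substituting the structural constraints (each $T$-vertex adjacent to at most $n-\gamma-1$ other $T$-vertices and arbitrarily many $S$-vertices, but then those $S$-vertices are non-adjacent among themselves and degree-limited). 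I expect this to reduce to a statement of the form: the maximum of the quadratic form over the polytope of feasible degree sequences with domination number $\ge\gamma$ is $n-\gamma$, achieved by the stated graphs.

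For the equality analysis I would trace back: equality $\rho_\alpha(G)=n-\gamma$ forces, for the component containing $z$, that it is $A_\alpha$-regular of degree $n-\gamma$ (since $\rho_\alpha\le\Delta$ with equality iff a component is $\Delta$-regular, by \cite[Proposition~11]{NP}), hence that component is $(n-\gamma)$-regular on some number of vertices; combined with the domination constraint this pins it down to either $K_{n-\gamma+1}$ (giving $G\cong K_{n-\gamma+1}\cup(\gamma-1)K_1$, whose domination number is indeed $1+(\gamma-1)=\gamma$) or, when $\gamma\ge 2$ and $n-\gamma$ even, to the cocktail-party-type graph $\overline{\tfrac{n-\gamma+2}{2}K_2}$, which is $(n-\gamma)$-regular on $n-\gamma+2$ vertices and has domination number $2$; adding $(\gamma-2)K_1$ preserves the domination number at $\gamma$. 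One must check these are the \emph{only} $(n-\gamma)$-regular graphs whose extra isolated vertices bring the domination number up to exactly $\gamma$ (not more): a $(n-\gamma)$-regular connected graph on $t$ vertices has $t\ge n-\gamma+1$, and its own domination number is $1$ iff $t=n-\gamma+1$, is $2$ iff $t=n-\gamma+2$ and the graph is the complement of a perfect matching, and is $\ge 3$ otherwise — but then $\gamma(G)\ge 3+(\text{isolated count})>\gamma$ unless forbidden. The main obstacle I anticipate is precisely this last combinatorial case-check — verifying that $t=n-\gamma+2$ with the complement-of-matching structure is the unique way to get domination number $2$ from a regular graph of that degree, and ruling out $t\ge n-\gamma+3$ — since the spectral part is essentially a clean quotient-matrix / Rayleigh-quotient estimate. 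The independence of $\alpha$ is automatic once the bound is phrased as $x^\top A_\alpha x\le n-\gamma$, because the extremal graphs are regular and for regular graphs $\rho_\alpha$ does not depend on $\alpha$.
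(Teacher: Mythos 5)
Your proposal has two genuine gaps. The first is in the upper bound itself: you correctly observe that the eigenequation only yields $\rho_\alpha(G)\le d_z\le n-1$, but the quotient-matrix/polytope plan you sketch in its place is not a proof — a vertex of $T=V(G)\setminus S$ can indeed have arbitrarily many neighbours in $S$, and nothing in your sketch controls that contribution. The observation you are missing is much simpler and is the heart of the paper's argument: for a vertex $u$ of maximum degree $\Delta$, the set $R_u=V(G)\setminus N_G(u)$ is a dominating set (every vertex outside it lies in $N_G(u)$ and is dominated by $u\in R_u$), so $\gamma\le|R_u|=n-\Delta$, i.e.\ $\Delta\le n-\gamma$, and then $\rho_\alpha(G)\le\Delta\le n-\gamma$ follows from the standard bound $\rho_\alpha(G)\le\Delta$. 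No Rayleigh-quotient computation is needed.

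The second gap is in the equality analysis. You correctly reduce to a component $G_1$ that is $(n-\gamma)$-regular (via \cite[Proposition~11]{NP}), but your proposed classification lemma — that a connected $k$-regular graph on $t$ vertices has domination number $2$ if and only if $t=k+2$ and it is the complement of a perfect matching, and domination number at least $3$ otherwise — is false: $K_{3,3}$ is $3$-regular on $6=k+3$ vertices with domination number $2$, and $C_5$ is $2$-regular on $5=k+3$ vertices with domination number $2$. The condition you actually need is $\gamma(G_1)=t-k$ (after accounting for the $n-t$ isolated vertices), and the clean way to exploit it is again via $R_u$: equality forces $R_u$ to be a \emph{minimum} dominating set, which forces $R_u$ to be independent (otherwise deleting a non-isolated vertex of $R_u\setminus\{u\}$ yields a smaller dominating set); independence of $R_u$ then shows every component other than $G_1$ is trivial, and that $u$ can have at most one non-neighbour inside $G_1$ (two non-neighbours $v,w$ would each be adjacent to all of $N_{G_1}(u)$ by regularity, so $(R_u\setminus\{v,w\})\cup\{z\}$ with $z\in N_{G_1}(u)$ would dominate with $\gamma-1$ vertices). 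This pins $G_1$ down to $K_{n-\gamma+1}$ or $\overline{\tfrac{n-\gamma+2}{2}K_2}$ without any general classification of regular graphs by domination number.
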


\begin{proof}  Let $\Delta$ be the maximum degree of $G$. For $u\in V(G)$ with $d_G(u)=\Delta$, it is easily seen that $R_u$ is a dominating set of $G$, and thus  $\gamma\le |R_u|=n-\Delta$, implying that $\Delta(G)\le n-\gamma$ with equality if and only if $R_u$ is a minimum  dominating set of $G$. Thus
$\rho_\alpha(G)\le \Delta\le n-\gamma$.

Suppose that  $\rho_\alpha(G)=n-\gamma$. Then $\Delta=n-\gamma$.  Obviously, $\rho_\alpha(G)=\rho_\alpha(G_1)$ for some nontrivial component $G_1$ of $G$. Let $\Delta_1$ be the maximum degree of $G_1$.
Note that $\rho_\alpha(G)=\rho_\alpha(G_1)\le \Delta_1\le \Delta=n-\gamma$. Thus  $G_1$ is regular, $\Delta_1=n-\gamma$, and
 $R_u$ is a minimum dominating set of $G$ for some $u\in V(G_1)$. Thus, $R_u$ is an independent set of $G$, and if $G$ is not connected, then any component different from $G_1$ is trivial.
 If $\gamma=1$, then  $G\cong K_n$. Suppose  that $\gamma\ge 2$.

Suppose that $d_{G_1}(u)\le |V(G_1)|-3$ for some $u\in V(G_1)$. Then there exists $v, w\in V(G_1)$ such that $uv, uw\not\in E(G_1)$. Since $G_1$ is $(n-\gamma)$-regular and
$R_u$ is an independent set of $G$, $v$ and $w$ are both adjacent to each vertex of $N_{G_1}(u)$, implying that,  for a vertex $z\in N_{G_1}(u)$, $(R_u\setminus \{v,w\})\cup\{z\}$ is a dominating set of $G$ with cardinality $\gamma-1$, a contradiction. Thus $d_{G_1}(u)=|V(G_1)|-1$ or $|V(G_1)|-2$.
If $d_{G_1}(u)=|V(G_1)|-1$, then since $G_1$ is $(n-\gamma)$-regular, we have  $G_1\cong K_{n-\gamma+1}$, and thus $G\cong K_{n-\gamma+1}\cup(\gamma-1)K_1$.
Suppose that $d_{G_1}(u)=|V(G_1)|-2$. Then there is unique vertex, say $v$,  in $V(G_1)\setminus\{u\}$ that is not adjacent to $u$, and $N_{G_1}(u)=N_{G_1}(v)$.  For any  $w\in N_{G_1}(u)$, since $w$ is adjacent to both $u$ and $v$, there is a unique vertex in  $N_{G_1}(u)\setminus\{w\}$ that is not adjacent to $w$ in $G_1$. Thus  $n-\gamma$ is even,  $G_1\cong \overline{\frac{n-\gamma+2}{2}K_2}$, and thus $G\cong\overline{\frac{n-\gamma+2}{2}K_2}\cup(\gamma-2)K_1$.

If $G\cong K_{n-\gamma+1}\cup(\gamma-1)K_1$, or if $\gamma\ge 2$, $n-\gamma$ is even and $G\cong\overline{\frac{n-\gamma+2}{2}K_2}\cup(\gamma-2)K_1$, then $G$ has a unique nontrivial regular component of degree $n-\gamma$, and thus $\rho_\alpha(G)=n-\gamma$.
\end{proof}

If $T$ is a tree on $n$ vertices, then, for $0\le \alpha<1$, we have by Corollary~\ref{ss} that $\rho_\alpha(T)\leq \rho_\alpha(S_n)$
with equality if and only if $T\cong S_n$, see \cite{NP}.

For $n\geq 4$ and $1\leq a\leq \left \lfloor \frac{n-2}{2}\right \rfloor$, let $D_{n,a}$ be the tree obtained from vertet-disjoint $S_{a+1}$ with center $u$
and $S_{n-a-1}$ with center $v$ by adding an edge $uv$.

\begin{Theorem} Let $T$ be a tree on $n\ge 4$ vertices. Suppose that $T\ncong S_n$. Then for $0\le \alpha<1$, $\rho_\alpha(T)\leq \rho_\alpha(D_{n,1})$
with equality if and only if $T\cong D_{n,1}$.
\end{Theorem}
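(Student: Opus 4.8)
The plan is to pass to the finite, nonempty (for $n\ge 4$) family $\mathcal{T}$ of all trees on $n$ vertices not isomorphic to $S_n$, fix a member $T^{*}$ of $\mathcal{T}$ with $\rho_{\alpha}(T^{*})$ maximum, and prove $T^{*}\cong D_{n,1}$; once this is done the bound $\rho_{\alpha}(T)\le \rho_{\alpha}(D_{n,1})$ and the equality characterization follow immediately. So the whole argument reduces to the assertion: \emph{if $T\in\mathcal{T}$ and $T\not\cong D_{n,1}$, then a single edge relocation produces a tree $T'\in\mathcal{T}$ with $\rho_{\alpha}(T')>\rho_{\alpha}(T)$.}

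To build $T'$, I would let $x$ be the Perron vector of $A_{\alpha}(T)$ and pick $\hat u\in V(T)$ with $x_{\hat u}=\max_{v}x_v$ (note $\hat u$ is not a pendant vertex, since $\rho_{\alpha}(T)>1$ forces a pendant vertex to carry a strictly smaller entry than its neighbor). Because $T\not\cong S_n$, some component $C$ of $T-\hat u$ has $|C|\ge 2$; let $w$ be the unique vertex of $C$ adjacent to $\hat u$ and let $w'$ be a neighbor of $w$ lying in $C$. Then $w'\ne \hat u$ and, since $T$ is a tree, $w'\notin N_T(\hat u)$, so the pair $(u,v)=(\hat u,w)$ with $s=1$ satisfies the hypotheses of Lemma~\ref{moving edge}; as $x_{\hat u}\ge x_w$, the graph $T'=T-ww'+\hat uw'$ has $\rho_{\alpha}(T')>\rho_{\alpha}(T)$, and $T'$ is again a tree on $n$ vertices.

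The one step that actually uses $T\not\cong D_{n,1}$, and the one to be most careful with, is verifying $T'\in\mathcal{T}$, i.e. $T'\not\cong S_n$. I would argue by contradiction: in $T'$ the vertex $\hat u$ keeps all its old neighbors and gains $w'$, so $d_{T'}(\hat u)\ge 2$; if $T'\cong S_n$ then $\hat u$ must be its center, whence $d_T(\hat u)=n-2$ and every vertex of $T'$ other than $\hat u$ is pendant. Reading the edge swap backwards, $d_{T'}(w)=1$ forces $N_T(w)=\{\hat u,w'\}$, $d_{T'}(w')=1$ forces $w'$ to be a pendant vertex of $T$, and every neighbor of $\hat u$ other than $w$ is pendant in $T$; but this says exactly $T\cong D_{n,1}$, a contradiction. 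Hence $T'\in\mathcal{T}$, contradicting the maximality of $T^{*}$ unless $T^{*}\cong D_{n,1}$.

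Finally I collect the conclusions: $T^{*}\cong D_{n,1}$ gives $\rho_{\alpha}(T)\le \rho_{\alpha}(D_{n,1})$ for every $T\in\mathcal{T}$; and if $T\in\mathcal{T}$ with $T\not\cong D_{n,1}$, applying the relocation to $T$ itself produces $T'\in\mathcal{T}$ with $\rho_{\alpha}(T)<\rho_{\alpha}(T')\le \rho_{\alpha}(D_{n,1})$, so equality holds only for $T\cong D_{n,1}$. I expect the main obstacle to be the bookkeeping of the previous paragraph: checking that the chosen pair $(w,w')$ genuinely satisfies $w'\in N_T(w)\setminus(N_T(\hat u)\cup\{\hat u\})$, and that a relocation of this shape can reach $S_n$ only starting from $D_{n,1}$; everything else is a direct application of Lemma~\ref{moving edge} together with a finiteness/extremality argument. (An alternative route would first use Lemma~\ref{moving edge} to show $D_{n,1}$ is extremal among the double stars $D_{n,a}$ and then separately rule out trees of diameter $\ge 4$, but the uniform extremal-element argument above avoids computing any $\rho_{\alpha}(D_{n,a})$.)
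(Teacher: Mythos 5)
Your proposal is correct, and it reaches the conclusion by a genuinely different route from the paper's. The paper also fixes an extremal tree $T$ in the family of non-stars, but then argues via the diameter: if $d\ge 4$ it contracts along a diametral path (Corollary~\ref{ss}, itself a consequence of Lemma~\ref{moving edge}) to get a non-star tree with larger $\alpha$-spectral radius, forcing $d=3$, hence $T\cong D_{n,a}$; it then compares double stars via Lemma~\ref{moving edge} to conclude $a=1$. You instead perform a single uniform relocation $T'=T-ww'+\hat uw'$ at the maximum Perron-entry vertex $\hat u$ (which, as you correctly note, cannot be pendant because $\rho_\alpha(T)>\rho_\alpha(P_2)=1$ forces a pendant entry to be strictly smaller than its neighbor's), and the whole burden shifts to showing $T'\ncong S_n$; your degree bookkeeping showing that $T'\cong S_n$ forces $N_T(w)=\{\hat u,w'\}$, $d_T(w')=1$ and all other neighbors of $\hat u$ pendant, i.e.\ $T\cong D_{n,1}$, is sound. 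What your approach buys is the elimination of both the diameter reduction and the separate optimization over the family $D_{n,a}$, at the cost of the slightly delicate characterization of when one edge relocation can produce a star; what the paper's approach buys is a cleaner structural reduction (to double stars) that requires no such characterization. Both ultimately rest on the same Lemma~\ref{moving edge}, and your hypotheses for it ($w'\in N_T(w)$, $w'\ne\hat u$, $w'\notin N_T(\hat u)$ since $T$ is acyclic, and $x_{\hat u}\ge x_w$) are all verified.
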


\begin{proof} It is trivial if $n=4$. Suppose that $n\ge5$. Let $T$ be a tree  with maximum $\alpha$-spectral radius among trees on $n$ vertices except the star $S_n$.

Let $d$ be the diameter of $T$. Since $T\ncong S_n$, we have $d\ge3$. Suppose that  $d\ge4$. Let $v_0 v_{1}\dots v_{d}$ be a diametral path of $T$.
Let $N_1=N_T(v_{d-1})\setminus \{v_{d-2}\}$
Let $T'=T-\{v_{d-1}v: v\in N_1\}+\{v_{d-2}v: v\in N_1\}$.
Obviously, $T'\ncong S_n$. By Corollary~\ref{ss}, we have $\rho_\alpha(T)<\rho_\alpha(T')$, a contradiction.
Thus $d=3$ and $T\cong D_{n,a}$, where $1\leq a\leq \left \lfloor \frac{n-2}{2}\right \rfloor$. By Lemma~\ref{moving edge}, we have  $a=1$ and  $T\cong D_{n,1}$.
%
\end{proof}

For positive integer $p$ and a graph $G$ with $u\in V(G)$, let $G(u;p)$  be the graph obtained from $G$ by attaching a pendant path of
length $p$ at $u$, and let   $G(u,0)=G$.

For nonegative integers $p$, $q$ and a graph $G$, let $G_u(p,q)$ or simply $G_{p,q}$ be the graph $H(u;q)$ with $H=G(u;p)$.
%
%
%
Nikiforov and Rojo~\cite{NO} conjectured  that $\rho_\alpha(G_{p, q})>\rho_\alpha(G_{p+1, q-1})$ for a nontrivial connected graph $G$ and integers $p$ and $q$ with $p\geq q\geq 2$, and mentioned  that they can show it is true when $\rho_\alpha(G_{p+1, q-1})\ge \frac{9}{4}$.  We show that it is really true.

\begin{Theorem}\label{pq}
Let $G$ be a connected graph with $|E(G)|\geq1$ and $u\in V(G)$. For integers $p\geq q\geq 1$ and $0\le \alpha<1$, $\rho_{\alpha}(G_{u}(p, q))> \rho_{\alpha}(G_{u}(p+1, q-1))$.
\end{Theorem}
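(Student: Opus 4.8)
The plan is to compare the characteristic polynomials $\phi_\alpha(F,\lambda):=\det(\lambda I-A_\alpha(F))$, each of which is monic with largest root $\rho_\alpha(F)$. Writing $\rho':=\rho_\alpha(G_u(p+1,q-1))$, it suffices to prove $\phi_\alpha(G_u(p,q),\rho')<0$: since $\phi_\alpha(G_u(p,q),\lambda)\to+\infty$ as $\lambda\to\infty$ and $\phi_\alpha(G_u(p,q),\lambda)>0$ for every $\lambda>\rho_\alpha(G_u(p,q))$, this forces $\rho_\alpha(G_u(p,q))>\rho'$, which is the assertion. One would first try to use Lemma~\ref{moving edge}, obtaining $G_u(p,q)$ from $G_u(p+1,q-1)$ by shifting the pendant vertex $v_{p+1}$ of the long path onto the end $w_{q-1}$ of the short path; but this requires $x_{w_{q-1}}\ge x_{v_p}$ for the Perron vector of $G_u(p+1,q-1)$, which already fails on small examples, so a polynomial argument seems unavoidable.

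First I would record a one-pendant-path reduction. If $H$ is obtained from a graph $H_0$ by attaching a pendant path of length $k$ at a vertex $u$, then a Schur-complement expansion of $\lambda I-A_\alpha(H)$ along the attached path yields
\[
\phi_\alpha(H,\lambda)=P_k(\lambda)\,\phi_\alpha(H_0,\lambda)-(1-\alpha)^2P_{k-1}(\lambda)\,\Psi_u(H_0,\lambda),
\]
where $P_k$ is the characteristic polynomial of $A_\alpha$ of a length-$k$ pendant path (a tridiagonal polynomial satisfying a Chebyshev-type recurrence), and $\Psi_u(H_0,\lambda)$ is the principal minor of $\lambda I-A_\alpha(H_0)$ obtained by deleting the row and column of $u$. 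One must be careful that $\Psi_u(H_0,\lambda)$ is not $\phi_\alpha(H_0-u,\lambda)$, since the surviving diagonal entries still record degrees in $H_0$. Applying this reduction twice expresses $\phi_\alpha(G_u(p,q),\lambda)$ and $\phi_\alpha(G_u(p+1,q-1),\lambda)$ as $\mathbb{R}[\lambda]$-linear combinations of $\phi_\alpha(G,\lambda)$ and $\Psi_u(G,\lambda)$, with coefficients assembled from the $P_j$ (and from the polynomials of free paths, which differ from $P_j$ by $\alpha P_{j-1}$).

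Next I would subtract. The coefficient of $\phi_\alpha(G,\lambda)$ in $\phi_\alpha(G_u(p,q),\lambda)-\phi_\alpha(G_u(p+1,q-1),\lambda)$ is $P_pP_q-P_{p+1}P_{q-1}$, which a Desnanot--Jacobi / Chebyshev product identity for these tridiagonal polynomials collapses to a single polynomial depending only on $p-q$, up to an explicit factor; the coefficient of $\Psi_u(G,\lambda)$ simplifies similarly. Evaluating at $\lambda=\rho'$, where $\phi_\alpha(G_u(p+1,q-1),\rho')=0$, reduces the theorem to showing that this difference is strictly negative at $\rho'$. Here I would use that $\rho'>\rho_\alpha(G)$ (because $G$ is a proper subgraph of $G_u(p+1,q-1)$, as $|E(G)|\ge1$), hence $\phi_\alpha(G,\rho')>0$; and $\Psi_u(G,\rho')>0$ as well, since it is the characteristic polynomial, evaluated at $\rho'$, of a nonnegative matrix --- a proper principal submatrix of $A_\alpha(G)$ --- whose spectral radius is strictly below $\rho_\alpha(G)<\rho'$. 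So the whole matter is reduced to controlling the signs of the path polynomials $P_j(\rho')$.

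The sign analysis is the main obstacle. When $\rho'$ is large enough --- this is the source of the hypothesis $\rho_\alpha(G_{p+1,q-1})\ge\frac{9}{4}$ in the partial result of Nikiforov and Rojo --- all the relevant $P_j(\rho')$ are positive and the difference is visibly negative; the genuine work is the remaining range of small $\rho'$, where the $P_j(\rho')$ may change sign. I would handle this by exploiting the structural constraints that a small $\rho_\alpha(G_u(p+1,q-1))$ imposes (bounded maximum degree, and hence few branch vertices in $G$ and in the whole graph), which confine the graph to an explicitly describable family; on that family the inequality can be verified directly, or by an ad hoc reduction of the parameters $p$ and $q$ --- for instance treating $q=1$ first and then inducting --- with strictness always coming from $\phi_\alpha(G,\rho')>0$ and $\Psi_u(G,\rho')>0$. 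I expect this small-$\rho'$ case analysis, rather than the polynomial identity itself, to be the bulk of the argument.
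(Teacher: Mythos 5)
Your proposal has a genuine gap: it reduces the theorem to controlling the signs of the path polynomials $P_j(\rho')$ and then openly concedes that the hard range --- small $\rho'$, which is precisely the case Nikiforov and Rojo could not settle --- is handled only by an unexecuted sketch (``confine the graph to an explicitly describable family'', ``ad hoc reduction of the parameters''). No actual argument is supplied there, and nothing in the proposal indicates why that case analysis would close; since everything the theorem adds to the known partial result lives in exactly that range, the proof is not complete.

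Moreover, your opening dismissal of the Perron-vector route is exactly backwards, and that route is where the paper's short, complete proof lives. The paper argues by contradiction: assume $\rho_\alpha(G_u(p,q))\le\rho_\alpha(G_u(p+1,q-1))$ and let $x$ be the Perron vector of $A_\alpha(G_u(p+1,q-1))$. Under this assumption the inequality $x_{v_{q-1}}\ge x_{u_p}$, which you observe ``fails on small examples'', cannot occur: if it held, Lemma~\ref{moving edge} would relocate the pendant vertex $u_{p+1}$ onto $v_{q-1}$ and produce a copy of $G_u(p,q)$ with strictly larger $\alpha$-spectral radius, contradicting the assumption. Iterating --- with the edge-switching Lemma~\ref{switching} supplying the inductive step --- gives $x_{u_{p-i}}>x_{v_{q-i-1}}$ for all $i=0,\dots,q-1$, in particular $x_{u_{p-q+1}}>x_u$; relocating all edges of $G$ incident with $u$ to $u_{p-q+1}$ (Lemma~\ref{moving edge} again) then yields yet another copy of $G_u(p,q)$ with strictly larger radius, the final contradiction. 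No characteristic polynomials and no case analysis on the size of $\rho'$ are needed.
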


\begin{proof} Let $u  u_1\dots u_{p+1}$ and $uv_1\dots v_{q-1}$ be the two pendant paths in  $G_u(p+1, q-1)$ at $u$ of lengths $p+1$ and $q-1$, respectively. Let $x$ be the Perron  vector of $A_{\alpha}(G_u(p+1, q-1))$. Let $v_0=u$.
Suppose that $\rho_{\alpha}(G_{u}(p, q))\le \rho_{\alpha}(G_{u}(p+1, q-1))$.

\noindent {\bf Claim.} For all $i=0,1, \dots, q-1$, $x_{u_{p-i}}>x_{v_{q-i-1}}$.

If $x_{v_{q-1}}\ge x_{u_p}$, then for  $H=G_u(p+1, q-1)-u_pu_{p+1}+v_{q-1}u_{p+1}$, we have $H \cong G_{u}(p, q)$, and thus  by Lemma~\ref{moving edge},  $\rho_{\alpha}(G_{u}(p, q))=\rho_{\alpha}(H)> \rho_{\alpha}(G_{u}(p+1, q-1))$, a contradiction. Thus $x_{u_p}>x_{v_{q-1}}$. This proves the claim for $i=0$. If $q=1$, then $i=0$ and the claim follows.
Suppose that $q\ge 2$, and  $x_{u_{p-i}}>x_{v_{q-i-1}}$ where  $0\le i\le q-2$. If  $x_{v_{q-(i+1)-1}}\ge x_{u_{p-(i+1)}}$, then for
\begin{eqnarray*}
H'&=&G_{u}(p+1, q-1)-\{u_{p-(i+1)}u_{p-i}, v_{q-(i+1)-1}v_{q-i-1}\}\\
   &&+\{u_{p-i}v_{q-(i+1)-1}, u_{p-(i+1)}v_{q-i-1}\},
\end{eqnarray*}
 we have $H'\cong G_{u}(p, q)$ and thus by Lemma~\ref{switching} that $\rho_{\alpha}(G_{u}(p, q))=\rho_{\alpha}(H')> \rho_{\alpha}(G_{u}(p+1, q-1))$, a contradiction.  Thus  $x_{u_{p-(i+1)}}> x_{v_{q-(i+1)-1}}$.
Therefore, the claim follows.

%
%

By the claim for $i=q-1$, we have $x_{u_{p-(q-1)}}>x_{u}$. Since $G_{u}(p+1, q-1)-\{uw: uw\in E(G)\}+ \{u_{p-(q-1)}w: uw\in E(G)\}\cong G_u(p, q)$, we have by Lemma~\ref{moving edge} that $\rho_{\alpha}(G_u(p, q))> \rho_{\alpha}(G_u(p+1, q-1))$, a contradiction.

Therefore $\rho_{\alpha}(G_{u}(p, q))> \rho_{\alpha}(G_{u}(p+1, q-1))$.
\end{proof}

Let $G$ be a connected graph with $uv\in E(G)$. For nonnegative integers $p$ and $q$, let $G_{u,v}(p, q)$ be the graph $H(v;q)$ with $H=G(u;p)$.
It was conjectured in~\cite{NO} that if the degrees of $u$ and $v$ are at least two in $G$, then
for $p\ge q \ge 2$ and $0\le \alpha <1$, $\rho_{\alpha}(G_{u,v}(p,q))>\rho_{\alpha}(G_{u,v}(p+1,q-1))$. Now we show that this is also indeed true.

\begin{Theorem} \label{pq+} Let $G$ be a connected graph, and let $u$ and $v$ be adjacent vertices of $G$ of degree at least $2$. For $p\ge q \ge 1$ and $0\le \alpha <1$, $\rho_{\alpha}(G_{u,v}(p,q))>\rho_{\alpha}(G_{u,v}(p+1,q-1))$.
\end{Theorem}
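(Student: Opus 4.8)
The plan is to run the argument of Theorem~\ref{pq} almost verbatim, with the hypotheses $d_G(u),d_G(v)\ge 2$ entering only at the last step. Write $uu_1\cdots u_{p+1}$ and $vv_1\cdots v_{q-1}$ for the two pendant paths of $G_{u,v}(p+1,q-1)$, set $v_0=v$, assume for contradiction that $\rho_\alpha(G_{u,v}(p,q))\le\rho_\alpha(G_{u,v}(p+1,q-1))$, and let $x$ be the Perron vector of $A_\alpha(G_{u,v}(p+1,q-1))$.

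First I would establish the analogue of the Claim from Theorem~\ref{pq}: for every $i\in\{0,1,\dots,q-1\}$ one has $x_{u_{p-i}}>x_{v_{q-i-1}}$, proved by induction on $i$. The base case $i=0$ uses Lemma~\ref{moving edge}: the graph obtained from $G_{u,v}(p+1,q-1)$ by relocating the pendant vertex $u_{p+1}$ from $u_p$ to $v_{q-1}$ is isomorphic to $G_{u,v}(p,q)$, so $x_{v_{q-1}}\ge x_{u_p}$ would force $\rho_\alpha(G_{u,v}(p,q))>\rho_\alpha(G_{u,v}(p+1,q-1))$, a contradiction; hence $x_{u_p}>x_{v_{q-1}}$. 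The inductive step uses Lemma~\ref{switching}: if $x_{u_{p-i}}>x_{v_{q-i-1}}$ but $x_{v_{q-i-2}}\ge x_{u_{p-i-1}}$, then deleting $u_{p-i-1}u_{p-i}$ and $v_{q-i-2}v_{q-i-1}$ and inserting $u_{p-i}v_{q-i-2}$ and $u_{p-i-1}v_{q-i-1}$ once more yields a copy of $G_{u,v}(p,q)$, giving the same contradiction. Every isomorphism here is checked exactly as in Theorem~\ref{pq}, since all these rearrangements move only edges lying on the two pendant paths and leave the subgraph $G$ (and the edge $uv$) untouched; the index constraints $p-i-1\ge 1$, $q-i-2\ge 0$ and the distinctness and non-adjacency hypotheses of Lemma~\ref{switching} are guaranteed by $p\ge q$ and need only a routine verification.

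Taking $i=q-1$ yields $x_{u_{p-q+1}}>x_{v_0}=x_v$. At this point Theorem~\ref{pq} finishes by transplanting all $G$-edges at the common base vertex onto $u_{p-q+1}$; but with the two paths hanging from the distinct adjacent vertices $u$ and $v$, that transplant does not reproduce $G_{u,v}(p,q)$, and this is where I expect the real difficulty. The plan is to exploit $d_G(v)\ge 2$ — so that $v$ retains neighbours outside the $v$-branch after any move — and to perform one further rearrangement (a last application of Lemma~\ref{moving edge} and/or Lemma~\ref{switching}, justified by $x_{u_{p-q+1}}>x_v$ together with the inequalities from the Claim) that relocates the $v$-branch and adjusts the edges around $u_{p-q+1}$ so as to produce a graph isomorphic to $G_{u,v}(p,q)$ of strictly larger $\alpha$-spectral radius, contradicting the assumption. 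The case $q=1$, where there is no $v$-path and the Claim collapses to $x_{u_p}>x_v$, should be treated separately; it is essentially here — together with the exclusion of $G=K_2$ — that the hypotheses $d_G(u),d_G(v)\ge 2$ are needed, to avoid coincidences such as $G_{u,v}(p,1)\cong G_{u,v}(p+1,0)$. Thus the Claim and all the bookkeeping are routine copies of the proof of Theorem~\ref{pq}, and the main obstacle is pinning down the correct final edge rearrangement and verifying that it is admissible precisely because $u$ and $v$ have degree at least two in $G$.
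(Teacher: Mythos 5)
Your adaptation of the Claim from Theorem~\ref{pq} is correct and matches the paper: the base case and the inductive step move only edges lying on the two pendant paths, so the isomorphisms with $G_{u,v}(p,q)$ go through unchanged and one obtains $x_{u_{p-(q-1)}}>x_{v_0}=x_v$. But the proof stops exactly where the theorem stops being a copy of Theorem~\ref{pq}: you explicitly defer ``the correct final edge rearrangement'' to later, and that final step is the actual content here, so as it stands there is a genuine gap.

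What the paper does to close it is a two-stage argument, not a single rearrangement. First, using $x_{u_{p-(q-1)}}>x_v$ and Lemma~\ref{moving edge}, all edges $vw$ with $vw\in E(G)$ are relocated to $u_{p-(q-1)}$, producing an intermediate graph $G'$ with $\rho_\alpha(G')>\rho_\alpha(G_{u,v}(p+1,q-1))$. If $p=q$ this $G'$ is already isomorphic to $G_{u,v}(p,q)$ and one is done; but if $p>q$ it is not, and a second rearrangement is needed to turn $G'$ into $G_{u,v}(p,q)$. Crucially, that second step must be controlled by the Perron vector $x'$ of $G'$, not by the vector $x$ you have been working with (Lemmas~\ref{moving edge} and~\ref{switching} only apply to the Perron vector of the graph being modified), so ``the inequalities from the Claim'' cannot justify it. The paper first shows $x'_{u_{p-(q-1)}}>x'_v$ (else moving the edges back would contradict $\rho_\alpha(G')>\rho_\alpha(G_{u,v}(p+1,q-1))$), and then splits into two cases: if $x'_{u_{p-q}}\ge x'_u$, relocating the $G$-edges at $u$ to $u_{p-q}$ gives $G_{u,v}(p,q)$ via Lemma~\ref{moving edge}; if $x'_{u_{p-q}}<x'_u$, the switch deleting $uv$ and $u_{p-q}u_{p-(q-1)}$ and inserting $uu_{p-(q-1)}$ and $vu_{p-q}$ gives $G_{u,v}(p,q)$ via Lemma~\ref{switching}. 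Either way $\rho_\alpha(G_{u,v}(p,q))>\rho_\alpha(G')$, contradicting the assumption. None of this dichotomy, nor the need to introduce $G'$ and its own Perron vector, appears in your sketch, so the proposal as written does not yet prove the theorem.
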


\begin{proof} Let $u u_1 \dots u_{p+1}$ and $v v_1 \dots v_{q-1}$ be the two pendant paths at $u$ and $v$ in $G_{u,v}(p+1,q-1)$, respectively. Let $x$ be the Perron vector of $A_{\alpha}(G_{u,v}(p+1,q-1))$. Let $u_0=u, v_0=v$. Suppose that $\rho_{\alpha}(G_{u,v}(p,q))\le\rho_{\alpha}(G_{u,v}(p+1,q-1))$.

By argument as in the proof of Theorem~\ref{pq}, we have $x_{u_{p-i}}>x_{v_{q-i-1}}$  for all $i=0,1,\dots, q-1$. Thus  $x_{u_{p-(q-1)}}>x_{v}$. Let $G'=G_{u,v}(p+1,q-1)-\{vw: vw\in E(G)\}+\{u_{p-(q-1)}w: vw\in E(G)\}$. By Lemma~\ref{moving edge}, we have $\rho_{\alpha}(G')> \rho_{\alpha}(G_{u}(p+1, q-1))$. If $p=q$, then $G'\cong G_{u,v}(p,q)$ and thus $\rho_{\alpha}(G_{u,v}(p,q))>\rho_{\alpha}(G_{u,v}(p+1,q-1))$, a contradiction. Thus $p>q$.
Let $x'$ be the Perron vector of $A_{\alpha}(G')$.
Note that $x'_{u_{p-(q-1)}}> x'_{v}$; Otherwise, we have by  Lemma~\ref{moving edge} that $\rho_{\alpha}(G')<\rho_{\alpha}(G_{u}(p+1, q-1))$, a contradiction.

If $x'_{u_{p-q}}\ge x'_{u}$, then since $G'-\{uw: uw\in E(G)\}+\{u_{p-q}w: uw\in E(G) \}\cong G_{u,v}(p,q)$, we have by Lemma~\ref{moving edge} that $\rho_{\alpha}(G_{u,v}(p,q))>\rho_{\alpha}(G')>\rho_{\alpha}(G_{u,v}(p+1,q-1))$, a contradiction. Thus we may assume that $x'_{u_{p-q}}< x'_{u}$. Since $G'-\{uv, u_{p-q}u_{p-(q-1)}\}+\{uu_{p-(q-1)}, vu_{p-q}\}\cong G_{u,v}(p,q)$, we have by Lemma~\ref{switching} that $\rho_{\alpha}(G_{u,v}(p,q))>\rho_{\alpha}(G')>\rho_{\alpha}(G_{u,v}(p+1,q-1))$, also a contradiction.

Therefore, $\rho_{\alpha}(G_{u,v}(p,q))>\rho_{\alpha}(G_{u,v}(p+1,q-1))$.
\end{proof}

%

For $3\le d\le n-1$, let $T_{n,d}$ be the tree obtained from a path $v_0v_1 \cdots v_d$ with length $d$ by attaching $n-1-d$ pendant edges at vertex $v_{\lfloor \frac{d}{2}\rfloor}$.

\begin{Theorem}  Let $T$ be a tree with $n$ vertices and diameter $d\ge 3$. For $0\le \alpha<1$, $\rho_{\alpha}(T)\le\rho_{\alpha}(T_{n, d})$  with equality if and only if $T\cong T_{n, d}$.
\end{Theorem}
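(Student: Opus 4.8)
The plan is to show that any tree $T$ with $n$ vertices and diameter $d$ can be transformed, via operations that strictly increase the $\alpha$-spectral radius, into $T_{n,d}$, unless it is already $T_{n,d}$. First I would fix a diametral path $P = v_0 v_1 \cdots v_d$ in $T$; this path is preserved throughout so the diameter stays exactly $d$. Every vertex of $T$ not on $P$ hangs off $P$ through some subtree rooted at an internal vertex $v_i$ with $1 \le i \le d-1$. The goal is to push all of this extra mass onto the single vertex $v_{\lfloor d/2\rfloor}$ and flatten each hanging subtree into pendant edges.

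The argument proceeds in two stages. Stage one: flatten each hanging subtree into a star centered at its attachment vertex. If some subtree hanging at $v_i$ has depth at least $2$, take a vertex $w$ at maximal distance from $v_i$ inside that subtree together with its neighbor $w'$ closer to $v_i$; since $w'$ still has a neighbor toward $v_i$ and $w$ lies inside the subtree (not on $P$), the edge $w'w$ together with all other pendant neighbors of $w'$ can be moved to the parent of $w'$ by Corollary~\ref{ss} (collapsing a cut edge and re-attaching a pendant), strictly increasing $\rho_\alpha$ while keeping $P$ intact and $d$ unchanged. Iterating, every hanging subtree becomes a collection of pendant edges at some $v_i$ with $1 \le i \le d-1$; moreover pendant edges at $v_0$ or $v_d$ would create a path longer than $d$, so they never appear. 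Stage two: consolidate all these pendant edges at a single internal vertex and identify which one. Suppose pendant edges are attached at two distinct internal vertices $v_i$ and $v_j$; let $x$ be the Perron vector and assume without loss of generality $x_{v_i} \ge x_{v_j}$. Then Lemma~\ref{moving edge} lets us move all pendant neighbors of $v_j$ (which lie in $N(v_j)\setminus N(v_i)$ and differ from $v_i$, as $v_i, v_j$ are non-adjacent or the structure is handled directly) over to $v_i$, strictly increasing $\rho_\alpha$; this again does not change $d$ because we only relocate pendant leaves, never touching $P$. Repeating, all pendant edges end up at one internal vertex $v_k$, so $T$ has become the tree obtained from $P$ by attaching $n-1-d$ pendant edges at $v_k$; call this $T_{n,d}^{(k)}$.

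It remains to show $k = \lfloor d/2\rfloor$ maximizes $\rho_\alpha$ among $T_{n,d}^{(k)}$, $1 \le k \le d-1$. Here I would use a symmetry/relocation comparison: on the diametral path the two pendant sub-paths attached to $v_k$ have lengths $k$ and $d-k$, and by Theorem~\ref{pq+} (applied with the role of $G$ being $T_{n,d}^{(k)}$ with its pendant-star mass at the attachment vertex, and $u, v$ the attachment vertex and an adjacent path vertex, both of degree $\ge 2$) moving a pendant edge from the longer of the two sub-paths to the shorter one — i.e. replacing the pair of lengths $(p,q)$ with $p > q$ by $(p-1, q+1)$ — strictly increases $\rho_\alpha$. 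Hence the maximum over $k$ is attained when the two lengths are as equal as possible, namely $\{k, d-k\} = \{\lfloor d/2\rfloor, \lceil d/2\rceil\}$, which gives $T \cong T_{n,d}$. The equality discussion follows since every transformation used is strictly increasing, so $\rho_\alpha(T) = \rho_\alpha(T_{n,d})$ forces $T$ to have admitted no such move, i.e. $T \cong T_{n,d}$.

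The main obstacle I anticipate is stage two and the final optimization: verifying that Theorem~\ref{pq+} genuinely applies requires checking the degree-at-least-two hypothesis for $u$ and $v$ (true as long as $d \ge 3$ and $n - 1 - d \ge 1$, with the boundary case $n-1 = d$, i.e. $T$ is a path, handled trivially since then $T = P_n = T_{n,d}$), and one must be careful that relocating pendant edges in stage two never accidentally shortens or lengthens the diametral path — this is fine because leaves attached at internal $v_i$ contribute no long path, but the bookkeeping about which vertices lie in $N(v_j)\setminus N(v_i)$ in the consolidation step needs a short case check when $v_i$ and $v_j$ happen to be adjacent. Everything else is a routine assembly of Corollary~\ref{ss}, Lemma~\ref{moving edge}, Lemma~\ref{switching}, and Theorem~\ref{pq+}.
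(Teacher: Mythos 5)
Your reduction is sound and essentially the paper's: stage one (collapsing hanging subtrees into pendant edges at internal vertices of $P$, via Corollary~\ref{ss}) and stage two (consolidating all pendant edges at a single $v_k$ via Lemma~\ref{moving edge}) together give exactly the paper's intermediate conclusion that $T$ must be $P$ with $n-1-d$ pendant edges at one internal vertex; the paper reaches the same point by applying Lemma~\ref{moving edge} directly with a case split on Perron entries, but your route through Corollary~\ref{ss} is equally valid, and your worry about $v_i,v_j$ being adjacent is a non-issue since the relocated vertices are leaves adjacent only to $v_j$, hence automatically lie in $(N(v_j)\setminus N(v_i))\setminus\{v_i\}$.

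The genuine gap is the final balancing step. You invoke Theorem~\ref{pq+}, but its hypotheses cannot be met here. In $T_{n,d}^{(k)}$ the two long pendant paths both emanate from the \emph{same} vertex $v_k$; to write this tree as $G_{u,v}(p,q)$ with $u,v$ adjacent you are forced to take $G$ to be the star at $v_k$ together with one path edge $v_kv_{k\pm1}$, and then $v=v_{k\pm1}$ has degree $1$ in $G$, violating the requirement that both $u$ and $v$ have degree at least $2$ in $G$. Your parenthetical claim that the degree condition holds when $n-1-d\ge 1$ is therefore false for $v$. The correct tool — and the one the paper uses — is Theorem~\ref{pq}: take $G=S_{n-d}$ (the star consisting of $v_k$ and its $n-1-d$ pendant neighbors, nontrivial once $n\ge d+2$; the case $n=d+1$ is the path and is trivial), $u=v_k$, so that $T_{n,d}^{(k)}\cong G_u(d-k,k)$, and $\rho_\alpha(G_u(p,q))>\rho_\alpha(G_u(p+1,q-1))$ for $p\ge q\ge 1$ forces the balanced split $k=\lfloor d/2\rfloor$. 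With that substitution the argument closes correctly.
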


\begin{proof} Let $T$ be a tree with maximum $\alpha$-spectral radius among trees with $n$  vertices and diameter $d$. Let $P=v_0\dots v_{d}$ be a diametral  path of $T$. For any $u\in V(T)$, let $d_T(u,P)=\min\{d_T(u, v_i): i=0, \dots, d\}$.

Suppose that $uv$ is an edge outside $P$ that is not a pendant edge. Assume that $d_T(u, P)<d_T(v, P)$. Let $w$ be the vertex on $P$ with $d_T(u, P)=d_T(u, w)$.
Let $T^*=T-\{vz: vz\in E(T), z\ne u\}+ \{wz: vz\in E(T), z\ne u\}$ if  $x_w\ge x_v$,   
and $T^*=T-\{wz: wz\in E(T)\setminus \{e\}\}+ \{vz: zw\in E(T)\setminus \{e\}\}$ otherwise,  where $e$ is the edge incident with  $w$ in the path connecting $w$ and $v$. Obviously, $T^*$ is
a tree with $n$ vertices and diameter $d$.
By Lemma~\ref{moving edge}, $\rho_{\alpha}(T^*)> \rho_{\alpha}(T)$, a contradiction. Every edge outside $P$ is a pendant edge at some vertex of $P$ except $v_0$ and $v_d$.

Suppose that there are two vertices, say $u$ and $v$, on $P$ with degree greater than two.  We may assume that $x_u\ge x_v$. Let $T^*=T-\{vz: vz\in E(T)\setminus E(P)\}+ \{uz: vz\in E(T)\setminus E(P)\}$.  
By Lemma~\ref{moving edge}, we have $\rho_{\alpha}(T^*)> \rho_{\alpha}(T)$, a contradiction. It follows that there is at most one vertex on $P$ with degree greater than two.

Therefore $T$ is obtainable from $P$ by attaching $n-d-1$ pendant edges at a vertex different from $v_0$ and $v_d$. By  Theorem~\ref{pq}, we have $T\cong T_{n, d}$.
\end{proof}

It is known that $T_{n,d}$ is the unique tree with maximum $0$-spectral radius among trees with $n$ vertices and diameter $d\ge 3$, see \cite{GS,SZ}.

\section{Difference between maximum degree and $\alpha$-spectral radius}

Recall that for a graph $G$ with maximum degree $\Delta$ and $0\le \alpha<1$,  $\rho_{\alpha}(G)\le \Delta$ with equality if and only if  $G$ has a component that is regular of degree $\Delta$. Let $\gamma_{\alpha}(G)=\Delta-\rho_{\alpha}(G)$. We may view $\gamma_{\alpha}(G)$ as a measure of irregularity of the graph $G$. The case when $\alpha=0$ has been studied in \cite{Ob}.

\begin{Theorem} \label{irr1} Let $G$ be a graph on $n\ge 2$ vertices. For $0\le \alpha<1$, we have
\[
\gamma_{\alpha}(G)\le n-1-\frac{\alpha n}{2}-\frac{\sqrt{\alpha^2n^2+4(1-2\alpha)(n-1)}}{2}
\]
with equality if and only if $G\cong S_n$.
\end{Theorem}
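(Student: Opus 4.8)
The plan is to reduce the problem to maximizing $\gamma_\alpha$ over the extremal candidates produced by earlier edge-relocation results, and then to pin down $S_n$ by a direct comparison. First I would handle the trivial observation that among all graphs on $n$ vertices, the quantity $\gamma_\alpha(G) = \Delta(G) - \rho_\alpha(G)$ can only be large when $G$ is ``far from regular'' in a strong sense; in particular I expect the extremal graph to be connected (adding a trivial component does not change $\Delta$ but can only make $\rho_\alpha$ drop, while adding an isolated vertex keeps $\rho_\alpha$ the same and $\Delta$ the same, so among graphs with fixed $n$ we may as well assume $G$ is connected with no isolated vertices — but some care is needed here, since the bound must be attained by $S_n$ and not, say, by $S_{n-1}\cup K_1$; the factor $n-1$ versus $n-2$ in the radical term is exactly what distinguishes them, so a short monotonicity-in-$n$ check is required). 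So the first genuine step is: \emph{if $G$ has a component that is a star $S_k$ and isolated vertices, then $\gamma_\alpha$ is maximized by taking all of $V(G)$ into one star.} This is a one-variable calculus fact about the explicit formula $\rho_\alpha(S_k)$, which is the larger root of $\lambda^2 - \alpha(k-1)\lambda - (1-\alpha)^2(k-1) = 0$ (obtained from the eigenequations on a center vertex, a leaf, and using that all leaves share the same Perron value).

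Second, for a connected graph $G$ with maximum degree $\Delta$, I would bound $\rho_\alpha(G)$ from below in terms of $\Delta$ and $n$ so that $\gamma_\alpha(G)$ is bounded above. The natural tool is that $\rho_\alpha(G) \ge \rho_\alpha(S_{\Delta+1})$ whenever $G$ contains a vertex of degree $\Delta$, because $S_{\Delta+1}$ is a subgraph of $G$ (the star on that vertex and its neighbors), and $\rho_\alpha$ is strictly monotone under taking proper connected subgraphs by the Minc-type corollary quoted in the preliminaries. Hence
\[
\gamma_\alpha(G) \le \Delta - \rho_\alpha(S_{\Delta+1}),
\]
and it remains to maximize the right-hand side over $1 \le \Delta \le n-1$ and check that the maximum occurs at $\Delta = n-1$, where $S_{\Delta+1} = S_n$ and equality in the subgraph bound forces $G = S_n$. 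Writing $f(\Delta) = \Delta - \rho_\alpha(S_{\Delta+1})$ and using the quadratic for $\rho_\alpha(S_{\Delta+1})$, one gets
\[
f(\Delta) = \Delta - \frac{\alpha\Delta + \sqrt{\alpha^2\Delta^2 + 4(1-\alpha)^2\Delta}}{2},
\]
so the claimed bound is exactly $f(n-1)$ rewritten (note $\alpha^2(n-1)^2 + 4(1-\alpha)^2(n-1) = \alpha^2 n^2 + 4(1-2\alpha)(n-1)$ after expanding, which matches the radicand in the statement). The task is then to show $f$ is increasing in $\Delta$ on the relevant range; since $f'(\Delta) = 1 - \tfrac{\alpha}{2} - \tfrac{\alpha^2\Delta + 2(1-\alpha)^2}{2\sqrt{\alpha^2\Delta^2 + 4(1-\alpha)^2\Delta}}$, I would show the subtracted fraction is less than $1 - \tfrac{\alpha}{2}$, i.e. $\alpha^2\Delta + 2(1-\alpha)^2 < (2-\alpha)\sqrt{\alpha^2\Delta^2 + 4(1-\alpha)^2\Delta}$, which after squaring reduces to a manifestly true inequality for $\alpha \in [0,1)$ and $\Delta \ge 1$ (the dominant term on the right is $(2-\alpha)^2\alpha^2\Delta^2 \ge \alpha^4\Delta^2$ plus a positive $\Delta$-linear term).

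The main obstacle I anticipate is \emph{not} the monotonicity computation but the equality analysis: I need $\gamma_\alpha(G) = f(n-1)$ to force $G \cong S_n$ on the nose, including ruling out disconnected competitors like $S_{n-1} \cup K_1$ and graphs with $\Delta < n-1$. The disconnected case is handled by the first step (a star plus isolated vertices is beaten by the full star) together with the strictness in the subgraph monotonicity when a component is a proper subgraph of another; the case $\Delta < n-1$ is killed by the \emph{strict} increase of $f$, and the case $\Delta = n-1$ with $G \ne S_n$ is killed because then $S_n = S_{\Delta+1}$ is a \emph{proper} connected subgraph of $G$, giving $\rho_\alpha(G) > \rho_\alpha(S_n)$ and hence $\gamma_\alpha(G) < f(n-1)$. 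Assembling these pieces gives the theorem; I would present the quadratic for $\rho_\alpha(S_k)$ explicitly at the start so that every subsequent inequality is a transparent algebraic consequence.
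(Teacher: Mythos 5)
Your strategy coincides with the paper's: bound $\rho_\alpha(G)\ge\rho_\alpha(S_{\Delta+1})$ by subgraph monotonicity, set $f(\Delta)=\Delta-\rho_\alpha(S_{\Delta+1})$, show $f$ is strictly increasing, and read off the equality case from $\Delta=n-1$ plus strictness of the subgraph inequality. However, there is a concrete error in your computation of $\rho_\alpha(S_k)$ that breaks the match with the stated bound. The eigenequation of $A_\alpha(S_k)$ at a leaf $l$ with center $c$ is $\rho x_l=\alpha\cdot 1\cdot x_l+(1-\alpha)x_c$; you have dropped the diagonal term $\alpha x_l$ coming from the leaf's degree, which is exactly how one arrives at your quadratic $\lambda^2-\alpha(k-1)\lambda-(1-\alpha)^2(k-1)=0$. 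The correct characteristic equation is $\lambda^2-\alpha k\lambda+(2\alpha-1)(k-1)=0$, whose larger root is $\rho_\alpha(S_{\Delta+1})=\frac{1}{2}\bigl(\alpha(\Delta+1)+\sqrt{\alpha^2(\Delta+1)^2+4(1-2\alpha)\Delta}\bigr)$, as quoted in the paper from Nikiforov. Consequently your claimed identity $\alpha^2(n-1)^2+4(1-\alpha)^2(n-1)=\alpha^2n^2+4(1-2\alpha)(n-1)$ is false: the two sides differ by $\alpha^2(2n-3)$. Moreover your expression strictly underestimates $\rho_\alpha(S_n)$ whenever $\alpha>0$ (for $n=3$, $\alpha=\tfrac12$ it gives $(1+\sqrt3)/2\approx 1.37$ instead of the correct value $\tfrac32$), so what you actually prove is a strictly weaker upper bound on $\gamma_\alpha(G)$ that $S_n$ does not attain; both the stated inequality and the equality characterization are lost as written.

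The repair is mechanical and brings you exactly onto the paper's proof: with the corrected $f(t)=t-\tfrac{\alpha(t+1)}{2}-\tfrac12\sqrt{\alpha^2(t+1)^2+4(1-2\alpha)t}$, your plan of showing $f'>0$ by isolating the square root and squaring still works (the paper reduces it to positivity of the quadratic $g(t)=\alpha^2t^2+2(\alpha^2-4\alpha+2)t-\alpha^2+3\alpha-1$, with $g(1)=(\alpha-1)(2\alpha-3)>0$), and then $f(n-1)$ is literally the bound in the statement with no identity to verify. Your extra discussion of disconnected competitors such as $S_{n-1}\cup K_1$ is harmless but unnecessary: the inequality $\rho_\alpha(G)\ge\rho_\alpha(S_{\Delta+1})$ needs no connectivity hypothesis, equality in $f(\Delta)\le f(n-1)$ forces $\Delta=n-1$ and hence connectivity, and then strict monotonicity under proper subgraphs forces $G\cong S_n$, exactly as you describe in your final paragraph.
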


\begin{proof} Let $\Delta$ be the maximum degree of $G$. Then $S_{\Delta+1}$ is a  subgraph of $G$. By  \cite[Crollary 2.2, p.~38]{Mi},
\[
\rho_{\alpha}(G)\ge \rho_{\alpha}(S_{\Delta+1}) 
\]
with equality when $G$ is connected (i.e., $A_{\alpha}(G)$ is irreducible) if and only if $G\cong S_{\Delta+1}$. From \cite{Ni1}, we have
\[
 \rho_{\alpha}(S_{\Delta+1})=\frac{\alpha(\Delta+1)+\sqrt{\alpha^2(\Delta+1)^2+4(1-2\alpha)\Delta}}{2}.
\]
Thus
\[
\gamma_{\alpha}(G)\le f(\Delta)
\]
with
\[
f(t)=t-\frac{\alpha(t+1)}{2}-\frac{\sqrt{\alpha^2(t+1)^2+4(1-2\alpha)t}}{2}.
\]
Note that
\[
f'(t)=1-\frac{\alpha}{2}-\frac{\alpha^2(t+1)+2(1-2\alpha)}{2\sqrt{\alpha^2(t+1)^2+4(1-2\alpha)t}}.
\]
It may be seen that $f'(t)> 0$ if and only if $g(t)>0$, where
\[
g(t)=\alpha^2t^2+2(\alpha^2-4\alpha+2)t-\alpha^2+3\alpha-1.
\]
Since $0\le \alpha<1$, we have
$-\frac{2(\alpha^2-4\alpha+2)}{2\alpha^2}<1$.
Thus $g(t)$ is strictly increasing for $t\ge 1$. Now it follows that for $t\ge1$,
\[
g(t)\ge g(1)=(\alpha-1)(2\alpha-3)
>0,
\]
or equivalently, $f'(t)>0$.
Thus $f(t)$ is strictly increasing for $t\ge 1$. Therefore
\[
\gamma_{\alpha}(G)\le f(\Delta)\le f(n-1)=n-1-\frac{\alpha n}{2}-\frac{\sqrt{\alpha^2n^2+4(1-2\alpha)(n-1)}}{2}
\]
with equalities if and only if $\Delta=n-1$ (implying that $G$ is connected) and $\rho_{\alpha}(G)=\rho_{\alpha}(S_{\Delta+1})$, or equivalently, $G\cong S_n$.
\end{proof}

The case $\alpha=0$ in previous theorem has been obtained in \cite{Ob}.

For connected graph $G$, if there is an automorphism $\sigma$ such that $\sigma(u)=v$, then $x_u=x_v$, where $x$ is the Perron vector of $A_{\alpha}(G)$ and $0\le \alpha<1$, see \cite[Proposition 16]{Ni1}.

For $n\ge 3$, let $S_n+e$ be the unicyclic graph obtained from the star by adding an edge to connect two vertices of degree one.

Let $x$ be the Perron vector of $A_{\alpha}(S_n+e)$.  Let $x_1$ be the entry of $x$ corresponding to the vertex of degree $n-1$. The entry of $x$ corresponding to the either vertex of degree $2$ is equal, which is denoted by $x_2$, the entry of each vertex of degree $1$ is equal, which is denoted by $x_3$. Let $\rho=\rho_{\alpha}(S_n+e)$.  Thus
\begin{eqnarray*}
(\rho-\alpha(n-1))x_1&=&2(1-\alpha)x_2+(n-3)(1-\alpha)x_3,\\
(\rho-1-\alpha)x_2&=&(1-\alpha)x_1,\\
(\rho-\alpha)x_3&=&(1-\alpha)x_1.
\end{eqnarray*}
Therefore $h(\rho)=0$ with
\begin{eqnarray*}
h(t)&=&t^3-(\alpha (n+1)+1)t^2+((\alpha^2+3\alpha-1)(n-1)+\alpha(\alpha+1))t\\
&&+(1-2\alpha)(\alpha+1)(n-1)-2(1-\alpha)^2.
\end{eqnarray*}
It follows that $\rho_\alpha((S_n+e))$ is the largest root of $h(t)=0$.

\begin{Theorem} \label{irr2}
Let $G$ be a unicyclic graph with $n\ge 4$ vertices. For $0\le \alpha<1$, we have
\[
\gamma_{\alpha}(G)\le \gamma_{\alpha}(S_n+e)
\]
with equality if and only if $G\cong S_n+e$.
\end{Theorem}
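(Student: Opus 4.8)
The plan is to reduce everything to a single comparison between $S_n+e$ and $S_{n-1}$. Since $S_n+e$ has maximum degree $n-1$, $\gamma_\alpha(S_n+e)=(n-1)-\rho_\alpha(S_n+e)$. Let $\Delta$ be the maximum degree of the connected unicyclic graph $G$, so $2\le\Delta\le n-1$. If $\Delta=n-1$, a vertex of degree $n-1$ is joined to every other vertex, using $n-1$ of the $n$ edges of $G$; the remaining edge joins two of its neighbours, so $G\cong S_n+e$ and equality holds. If $\Delta\le n-2$, then $S_{\Delta+1}$ is a proper subgraph of the connected graph $G$ (as $\Delta+1\le n-1<n$), hence $\rho_\alpha(G)>\rho_\alpha(S_{\Delta+1})$ and
\[
\gamma_\alpha(G)=\Delta-\rho_\alpha(G)<\Delta-\rho_\alpha(S_{\Delta+1})=f(\Delta),
\]
where $f$ is the function appearing in the proof of Theorem~\ref{irr1}, which is strictly increasing on $[1,\infty)$. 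Thus $\gamma_\alpha(G)<f(n-2)=(n-2)-\rho_\alpha(S_{n-1})$, and it is enough to prove
\[
\rho_\alpha(S_n+e)\le \rho_\alpha(S_{n-1})+1 .
\]

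For this I would build a test vector for $A_\alpha(S_{n-1})$ out of the Perron vector of $S_n+e$. Let $u$ be the vertex of degree $n-1$ in $S_n+e$, let $v,w$ be its two neighbours of degree $2$ (joined by the extra edge), and $\ell_1,\dots,\ell_{n-3}$ its leaves; let $x$ be the Perron vector of $A_\alpha(S_n+e)$ and $\rho=\rho_\alpha(S_n+e)$. Since $C_3$ is a proper subgraph of $S_n+e$, $\rho>\rho_\alpha(C_3)=2$; since the transposition of $v$ and $w$ is an automorphism, $x_v=x_w$, and the eigenequation at $v$ gives $(\rho-1-\alpha)x_v=(1-\alpha)x_u$, so $x_v<x_u$. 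Let $H=S_{n-1}$ have centre $c$ and leaves $c_1,\dots,c_{n-2}$, and define the positive vector $z$ on $V(H)$ by $z_c=x_u$, $z_{c_1}=x_v$ and $z_{c_i}=x_{\ell_{i-1}}$ for $2\le i\le n-2$. Using the eigenequations of $A_\alpha(S_n+e)$ at $u$, at $v$ and at a leaf, one computes
\[
(A_\alpha(H)z)_{c_1}=(\rho-1)z_{c_1},\qquad (A_\alpha(H)z)_{c_i}=\rho z_{c_i}\ge(\rho-1)z_{c_i}\ \ (i\ge 2),
\]
\[
(A_\alpha(H)z)_c=(\rho-\alpha)x_u-(1-\alpha)x_v=(\rho-1)z_c+(1-\alpha)(x_u-x_v)>(\rho-1)z_c .
\]
Hence $A_\alpha(H)z\ge(\rho-1)z$ entrywise with one strict inequality, and as $A_\alpha(H)$ is nonnegative and irreducible this forces $\rho_\alpha(S_{n-1})=\rho(A_\alpha(H))>\rho-1$, even a little stronger than needed.

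Combining the cases, for $\Delta\le n-2$ we get $\gamma_\alpha(G)<f(n-2)<\gamma_\alpha(S_n+e)$, while $\Delta=n-1$ gives $G\cong S_n+e$; so $\gamma_\alpha(G)\le\gamma_\alpha(S_n+e)$ with equality exactly when $G\cong S_n+e$. The one real obstacle is the reduction inequality $\rho_\alpha(S_n+e)\le\rho_\alpha(S_{n-1})+1$: one cannot simply invoke $f(\Delta)\le f(n-1)$, since the only unicyclic graph of maximum degree $n-1$ is $S_n+e$ (not $S_n$), and an asymptotic check as $\alpha\to1$ shows the margin between $f(n-2)$ and $\gamma_\alpha(S_n+e)$ tends to $0$, so the identities above (or, equivalently, a direct analysis of the cubic $h$ whose largest root is $\rho_\alpha(S_n+e)$) are exactly what makes the comparison go through for every $n\ge4$ and every $\alpha\in[0,1)$.
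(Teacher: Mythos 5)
Your proof is correct. The reduction is the same as the paper's: split on whether $\Delta=n-1$ (forcing $G\cong S_n+e$) or $\Delta\le n-2$, use the strict monotonicity of $f(t)=t-\rho_\alpha(S_{t+1})$ from Theorem~\ref{irr1} to bound $\gamma_\alpha(G)<f(n-2)=(n-2)-\rho_\alpha(S_{n-1})$, and then reduce everything to the single inequality $\rho_\alpha(S_n+e)<1+\rho_\alpha(S_{n-1})$ (the paper's $t_0$ is exactly $1+\rho_\alpha(S_{n-1})$). Where you genuinely diverge is in how this key inequality is proved. The paper identifies $\rho_\alpha(S_n+e)$ as the largest root of an explicit cubic $h$, checks that $t_0$ exceeds the larger critical point of $h$, and then shows $h(t_0)>0$ by a somewhat laborious two-case analysis on $\alpha\le\frac12$ versus $\alpha>\frac12$ with monotonicity in $n$. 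You instead transplant the Perron vector of $S_n+e$ onto $S_{n-1}$ (dropping one of the two degree-two vertices), verify via the eigenequations that $A_\alpha(S_{n-1})z\ge(\rho-1)z$ with strict inequality at the centre because $x_u>x_v$ (which needs $\rho>2$, supplied by the proper subgraph $C_3$), and conclude $\rho_\alpha(S_{n-1})>\rho_\alpha(S_n+e)-1$ from irreducibility. I checked the three eigenequation computations and they are right: $(A_\alpha z)_{c_1}=(\rho-1)z_{c_1}$, $(A_\alpha z)_{c_i}=\rho z_{c_i}$ for $i\ge2$, and $(A_\alpha z)_c=(\rho-1)z_c+(1-\alpha)(x_u-x_v)$. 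Your route avoids the characteristic polynomial entirely and gives a uniform argument in $\alpha$ and $n$; what you lose relative to the paper is only the explicit algebraic form of $\rho_\alpha(S_n+e)$, which the paper's computation produces as a by-product but which is not needed for the theorem.
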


\begin{proof} Let $\Delta$ be the maximum degree of $G$. Let
\[
t_0=1+\frac{\alpha(n-1)}{2}+\frac{\sqrt{\alpha^2(n-1)^2+4(1-2\alpha)(n-2)}}{2}.
\]

If $\Delta\le n-2$, then by the argument as in the proof of Theorem~\ref{irr1}, we have
\[
\gamma_{\alpha}(G)\le f(\Delta)\le f(n-2)=n-1-t_0.
\]
If $\Delta=n-1$, then $G\cong S_n+e$, and
\[
\gamma_{\alpha}(G)=n-1-\rho_{\alpha}(S_n+e),
\]
where $\rho_{\alpha}(S_n+e)$ is the largest root of $h(t)=0$.

Let $t_1$ be the larger root of  $h'(t)=0$, i.e.,
\[
t_1=\frac{\alpha(n+1)+1+\sqrt{\alpha^2n^2-(\alpha^2+7\alpha-3)n+\alpha^2+8\alpha-2}}{3}.
 \]
It may be checked that $t_0>t_1$. Thus $h(t)$ is strictly increasing for $t\ge t_0$.

\noindent
{\bf Case 1.} $0\le \alpha\le \frac{1}{2}$.

Note that
\begin{eqnarray*}
h(t_0)&=&\alpha(1-\alpha)(n-2)\frac{\alpha(n-1)+\sqrt{\alpha^2(n-1)^2+4(1-2\alpha)(n-2)}}{2}\\
&&+(3n-8)\alpha^2+(14-5n)\alpha+2n-6.
\end{eqnarray*}
We  view $h(t_0)$ as a function of $n$, denoted by $H(n)$. Then
\begin{eqnarray*}
2H'(n)&=&(2n-3)\alpha^2(1-\alpha)+\alpha(1-\alpha)\sqrt{\alpha^2(n-1)^2+4(1-2\alpha)(n-2)}\\
&&+\frac{\alpha(1-\alpha)(n-2)(\alpha^2(n-1)+2(1-2\alpha))}{\sqrt{\alpha^2(n-1)^2+4(1-2\alpha)(n-2)}}+6\alpha^2-10\alpha+4.
\end{eqnarray*}
For $n\ge 4$, since $0\le \alpha \le \frac{1}{2}$, we have $H'(n)>0$, and
thus $H(n)$ is strictly increasing for $n\ge 4$. Therefore
\[
h(t_0)=H(n)\ge H(4)=\alpha(1-\alpha)\sqrt{9\alpha^2+8(1-2\alpha)}+(1-\alpha)(3\alpha^2-4\alpha+2)
>0.
\]

\noindent {\bf Case 2.} $\frac{1}{2}< \alpha <1$.

Note that
\[
\sqrt{\alpha^2(n-1)^2+4(1-2\alpha)(n-2)}\ge \alpha(n-3)+\frac{2(1-\alpha)^2}{\alpha}.
\]
Then
\[
h(t_0)\ge \alpha^2(1-\alpha)(n-2)^2+((1-\alpha)^3+3\alpha^2-5\alpha+2)(n-2)-2(1-\alpha)^2.
\]
Let $H(n)$ be the expression in the right hand side of the above inequality, which is a function of $n$.
Then
\[
H'(n)=2\alpha^2(1-\alpha)n+3\alpha^3+2\alpha^2-8\alpha+3,
\]
which is strictly increasing for $n\ge 4$. It follows that 
\[
H'(n)\ge H'(4)=(1-\alpha)(5\alpha^2-5\alpha+3)
>0.
\]
Therefore
\[
h(t_0)\ge H(n)\ge H(4)=2(1-\alpha)(3\alpha^2-4\alpha+2)>0.
\]

Now combining Cases 1 and 2, we have $h(t_0)>0$. Thus, for $t\ge t_0$, we have $h(t)>h(t_0)>0$, implying that $\rho_{\alpha}(S_n+e)<t_0$. Now the result follows.
\end{proof}

\begin{Theorem} Let $G$ be a graph on $n\ge 4$ vertices. If $G$ is not bipartite, then for $0\le \alpha<1$, we have
\[
\gamma_{\alpha}(G)\le \gamma_{\alpha}(S_n+e)
\]
with equality if and only if $G\cong S_n+e$.
\end{Theorem}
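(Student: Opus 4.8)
The plan is to argue exactly as in the proof of Theorem~\ref{irr2}, splitting on the maximum degree $\Delta$ of $G$ into the cases $\Delta\le n-2$ and $\Delta=n-1$, and to reuse both the monotonicity of the function $f$ from the proof of Theorem~\ref{irr1} and the inequality $\rho_\alpha(S_n+e)<t_0$ that was established inside the proof of Theorem~\ref{irr2}. The only genuinely new ingredient is a short structural observation: a non-bipartite graph with a vertex of degree $n-1$ must contain $S_n+e$ as a (spanning) subgraph.

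First I would treat the case $\Delta\le n-2$, which in fact does not use non-bipartiteness at all. Since $S_{\Delta+1}$ is a connected subgraph of $G$, it is a subgraph of the component of $G$ carrying a vertex of degree $\Delta$, so by \cite[Corollary 2.2, p.~38]{Mi} we get $\rho_\alpha(G)\ge\rho_\alpha(S_{\Delta+1})$ and hence $\gamma_\alpha(G)\le f(\Delta)$, where $f$ is the strictly increasing (for $t\ge1$) function introduced in the proof of Theorem~\ref{irr1}. Thus $\gamma_\alpha(G)\le f(\Delta)\le f(n-2)=n-1-t_0$, with $t_0$ as in the proof of Theorem~\ref{irr2}. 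That proof shows $h(t_0)>0$ and $h$ strictly increasing on $[t_0,\infty)$, whence $\rho_\alpha(S_n+e)<t_0$ and so $\gamma_\alpha(S_n+e)=n-1-\rho_\alpha(S_n+e)>n-1-t_0\ge\gamma_\alpha(G)$. Hence the inequality is strict in this case and equality is impossible.

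Next I would handle $\Delta=n-1$. Pick a vertex $u$ with $d_G(u)=n-1$; then $G$ is connected and $A_\alpha(G)$ is irreducible. If $V(G)\setminus\{u\}$ were independent then $G$ would be bipartite with parts $\{u\}$ and $V(G)\setminus\{u\}$, contradicting the hypothesis; so there is an edge $vw$ with $v,w\in V(G)\setminus\{u\}$. Then the spanning subgraph formed by all edges incident with $u$ together with $vw$ is isomorphic to $S_n+e$, so $S_n+e$ is a subgraph of $G$, and by \cite[Corollary 2.2, p.~38]{Mi} we get $\rho_\alpha(G)\ge\rho_\alpha(S_n+e)$ with equality if and only if $G\cong S_n+e$. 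Therefore $\gamma_\alpha(G)=n-1-\rho_\alpha(G)\le n-1-\rho_\alpha(S_n+e)=\gamma_\alpha(S_n+e)$ with equality if and only if $G\cong S_n+e$. Combining the two cases proves the theorem.

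There is no serious obstacle here: Theorems~\ref{irr1} and \ref{irr2} have already done the analytic work (the monotonicity of $f$ and the key estimate $h(t_0)>0$), so the main points to get right are the bookkeeping showing the $\Delta\le n-2$ regime is \emph{strictly} dominated by $S_n+e$, and the elementary—but essential—use of non-bipartiteness to force $S_n+e\subseteq G$ when $\Delta=n-1$ (which is exactly where the bipartite star $S_n$, the extremal tree, gets excluded).
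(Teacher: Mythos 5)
Your proposal is correct and takes essentially the same route as the paper: split on $\Delta$, use $\rho_\alpha(S_n+e)<t_0$ from the proof of Theorem~\ref{irr2} to handle $\Delta\le n-2$ strictly, and reduce the case $\Delta=n-1$ to the subgraph inequality $\rho_\alpha(G)\ge\rho_\alpha(S_n+e)$. Your explicit justification that non-bipartiteness forces an edge among the non-central vertices (hence $S_n+e\subseteq G$) is a detail the paper leaves implicit, and you supply it correctly.
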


\begin{proof} Let $\Delta$ be the maximum degree of $G$.  If $\Delta=n-1$, then $S_n+e$ is a subgraph of $G$, and thus
\[
\gamma_{\alpha}(G)=n-1-\rho_{\alpha}(G)\le n-1-\rho_{\alpha}(S_n+e)
\]
with equality if and only if $G\cong S_n+e$. Now the result follows as in the proof of Theorem~\ref{irr2}.
\end{proof}

\section {Comments}

Because of the work of  Nikiforov~\cite{Ni1},  we may study the (adjacency) spectral properties and signless Laplacian spectral properties of a graph in a unified way.
Thus, we may also study those parameters based on the spectrum of $A_{\alpha}(G)$ of a graph $G$. We give two such examples. 

Let  $0\le \alpha<1$.
Let $G$ be a graph with $n$ vertices and $m$ edges.
Let $\lambda_1, \dots, \lambda_n$ be the eigenvalues of $A_{\alpha}(G)$, arranged in a non-increasing manner. Obviously, $\lambda_1=\rho_{\alpha}(G)$.

The first is the $\alpha$-energy of $G$, which is defined as
\[
\mathcal{E}_{\alpha}(G)=\sum_{i=1}^n \left|\lambda_i-\frac{2\alpha m}{n}\right|.
\]
Note that $\mathcal{E}_0(G)$ is the energy of $G$, which has been studied extensively \cite{Gu,LSG}, and $\mathcal{E}_{1/2}(G)$ is half of the signless Laplacian energy of $G$, which has received some attention in recent years \cite{ACG}. Let $A_{\alpha}=A_{\alpha}(G)$.
Note that $\sum_{i=1}^n\lambda_i=\mbox{tr}(A_{\alpha})=2\alpha m$  and $\sum_{i=1}^n\lambda_i^2=\mbox{tr}( A_\alpha^2)=2(1-\alpha)^2m+\alpha^2Z(G)$, where $Z(G)=\sum_{u\in V(G)}d_G(u)^2$.
By Cauchy-Schwarz inequality, we have
\begin{eqnarray*}
\mathcal{E}_{\alpha}(G)
&\le&\sqrt{n\sum_{i=1}^n\left|\lambda_i-\frac{2\alpha m}{n}\right|^2}\\
&=&\sqrt{n\sum_{i=1}^n\lambda^2_i-4\alpha^2m^2}\\
&=&\sqrt{2(1-\alpha)^2mn+\alpha^2(nZ(G)-4m^2)}\,.
\end{eqnarray*}
For $\mathcal{E}_0(G)$, this is just McClelland's upper bound in \cite{Mc}. 
 On the other hand, it is easily seen that $\mathcal{E}_{\alpha}(G)\ge 2(\lambda_1-\frac{2\alpha m}{n})$.
Note that $\lambda_1\ge \rho_0(G)$ \cite[Proposition~18]{Ni1}. Lower bounds for $\rho_0(G)$, for example,  $\lambda_1\ge \sqrt{\frac{Z(G)}{n}}\ge \frac{2m}{n}$, may be used to derive   lower bounds for
$\mathcal{E}_{\alpha}(G)$. Furthermore, let $s_i=\lambda_i-\frac{2\alpha m}{n}$ for $i=1, \dots, n$. Then $\sum_{i=1}^n s_i=0$,
implying that $\sum_{i=1}^ns_i^2\le 2\sum_{1\le i<j\le n}|s_i| |s_j|$. Thus $\mathcal{E}_{\alpha}(G)^2=\sum_{i=1}^ns_i^2+2\sum_{1\le i<j\le n}|s_i| |s_j|\ge 2\sum_{i=1}^ns_i^2$, i.e.,
\[
\mathcal{E}_{\alpha}(G)\ge \sqrt {2\left(2(1-\alpha)^2m+\alpha^2\left(Z(G)-\frac{4m^2}{n}\right)   \right)}\,.
\]


The second one is the $\alpha$-Estrada index of a graph $G$, defined as $EE_{\alpha}(G)=\sum_{i=1}^ne^{\lambda_i}$. Obviously, $EE_0(G)$ is just the much studied Estrada index of $G$, see, e.g., \cite{de,Es}. %
Note also that $EE_{1/2}(G)$ is somewhat different from the so called signless Laplacian Estrada index \cite{AB, GM}, which is defined to be $\sum_{i=1}^ne^{2\lambda_i}$ (with $\lambda_i$'s being the eigenvalues of $A_{1/2}(G)$).
For a graph $G$ with $n$ vertices and $m$ edges, it is easily seen that
\[
EE_{\alpha}(G)=n+2\alpha m+\sum_{k\ge 2} \frac{\sum_{i=1}^n\lambda_i^k}{k!}.
\]
As in \cite{ZG}, we have $\sum_{i=1}^n\lambda_i^k\le \left(\sum_{i=1}^n\lambda_i^2\right)^{k/2}=\left(\sqrt{2(1-\alpha)^2m+\alpha^2Z(G)}\right)^k$ for $k\ge 2$. Thus
\[
EE_{\alpha}(G)\le n-1+2\alpha m-\sqrt{2(1-\alpha)^2m+\alpha^2Z(G)}+e^{\sqrt{2(1-\alpha)^2m+\alpha^2Z(G)}}.
\]
\vspace{3mm}

\bigskip

\noindent {\bf Acknowledgement.} 
This work was supported by
the National Natural Science Foundation of China (No.~11671156).

%
%
%

\end{document}